\documentclass[12pt]{article}
\usepackage{amsmath,amscd,amsthm,a4,amssymb}

\def\Rn{{\mathbb{R}^n}}

\def\a {\alpha}
\def\i{\infty}

\def\Lploc{L_p^{\rm loc}(\Rn)}
\def\L1loc{L_1^{\rm loc}(\Rn)}

\def\dual{\,^{^{\complement}}\!}

\newcommand{\ess}{\mathop{\rm ess \; sup}\limits}
\newcommand{\es}{\mathop{\rm ess \; inf}\limits}

\newtheorem{thm}{Theorem}[section]
 \newtheorem{cor}[thm]{Corollary}
 \newtheorem{lem}[thm]{Lemma}
 
 \theoremstyle{definition}
 \newtheorem{defn}[thm]{Definition}
 \theoremstyle{remark}
 \newtheorem{rem}[thm]{Remark}
 
 \numberwithin{equation}{section}

\begin{document}

\begin{center}
\Large \bf Generalized local Morrey spaces and fractional integral operators with rough kernel
\end{center}

\centerline{\large Vagif S. Guliyev$^{a,b,}$\footnote{
{The research of V. Guliyev was partially supported by the grant of Science Development Foundation under the President of the Republic of Azerbaijan  project EIF-2010-1(1)-40/06-1
and by the Scientific and Technological Research Council of Turkey  (TUBITAK Project No: 110T695) and by the grant of 2010-Ahi Evran University Scientific Research Projects (PYO-FEN 4001.12.18).}
\\
E-mail adresses: vagif@guliyev.com (V.S. Guliyev).}}

\

\centerline{$^{a}$\it Department of Mathematics, Ahi Evran University, Kirsehir, Turkey}

\centerline{$^{b}$\it Institute of Mathematics and Mechanics of NAS of Azerbaijan, Baku}

\

\begin{abstract}
Let $M_{\Omega,\a}$ and $I_{\Omega,\a}$ be the fractional maximal and integral operators with rough kernels, where $0 < \a < n$. In this paper, we shall study the continuity properties of $M_{\Omega,\a}$ and $I_{\Omega,\a}$ on the generalized local Morrey spaces $LM_{p,\varphi}^{\{x_0\}}$. The boundedness of their commutators with local Campanato functions is also obtained.
\end{abstract}

\

\noindent{\bf AMS Mathematics Subject Classification:} $~~$ 42B20, 42B25, 42B35

\noindent{\bf Key words:} {fractional integral operator; rough kernels;
generalized local Morrey space; commutator; local Campanato space}

\

\section{Introduction}

For $x \in \Rn $ and $r > 0$, let  $B(x,r)$ denote the open ball centered at $x$ of radius $r$ and $|B(x,r)|$ is the Lebesgue measure of the ball $B(x,r)$.
Let $\Omega\in L^{s}(S^{n-1})$ be homogeneous of degree zero on $\Rn$, where $S^{n-1}$ denotes
the unit sphere of $\Rn$ $(n \ge 2)$ equipped with the normalized Lebesgue measure
$ d\sigma$ and $s > 1$. For any $0 < \a < n$, then the fractional integral operator with rough kernel
$I_{\Omega,\a}$ is defined by
$$
I_{\Omega,\a}f(x) = \int_{\Rn }\frac{\Omega(x-y)}{|x-y|^{n-\a}} f(y) dy
$$
and a related fractional maximal operator with rough kernel $M_{\Omega,\a}$ is defined by
$$
M_{\Omega,\alpha}f(x)=\sup_{t>0} |B(x,t)|^{-1+ \frac{\alpha}{n}} \int_{B(x,t)} |\Omega(x-y)| \,|f(y)|dy.
$$

If $\a=0$, then $M_{\Omega} \equiv M_{\Omega,0}$ is the Hardy-Littlewood maximal operator with rough kernel.
It is obvious that when $\Omega \equiv 1$, $I_{\Omega,\a}$ is the Riesz potential $I_{\a}$ and $M_{\Omega,\a}$ is the maximal operator $M_{\a}$.


\vspace{2mm}
{\bf Theorem A}  {\it Suppose that $\Omega \in L_s(S^{n-1})$, $ 1<s\leq\infty$, be a homogeneous of degree zero. Let $0 < \a < n$, $1 \le p < \frac{n}{\a}$, and $\frac{1}{q}=\frac{1}{p}-\frac{\a}{n}$. If $s'< p$ or $q< s$, then the operators $M_{\Omega,\a}$ and $I_{\Omega,\a}$ are bounded bounded from $L_p(\Rn)$ to $L_q(\Rn)$.}

\vspace{3mm}

Let $b$ be a locally integrable function on $\Rn$, then for $0 < \a < n$, we shall
define the commutators generated by fractional maximal and integral operators
with rough kernels and $b$ as follows.
\begin{align*}
M_{\Omega,b,\a}(f)(x) & = \sup_{t>0}|B(x,t)|^{-1+\frac{\alpha}{n}}\int_{B(x,t)} |b(x)-b(y)| |f(y)| |\Omega(x-y)|dy,
\\
[b, I_{\Omega,\a}]f(x) & = b(x) I_{\Omega,\a}f_{1}(x) - I_{\Omega,\a}(bf)(x)
\\
& = \int_{\Rn }\frac{\Omega(x-y)}{|x-y|^{n-\a}} [b(x)-b(y)] f(y) dy.
\end{align*}

\vspace{2mm}
{\bf Theorem B} {\it Suppose that $\Omega \in L_s(S^{n-1})$, $ 1<s\leq\infty$, be a homogeneous of degree zero. Let $0 < \a < n$, $1 \le p < \frac{n}{\a}$, $\frac{1}{q}=\frac{1}{p}-\frac{\a}{n}$ and $b \in BMO(\Rn)$. If $s'< p$ or $q< s$, then the operators $M_{\Omega,b,\a}$ and $[b, I_{\Omega,\a}]$ are bounded from $L_p(\Rn)$ to $L_q(\Rn)$.}

\vspace{3mm}

The classical Morrey spaces $M_{p,\lambda}$ were first introduced by Morrey in \cite{Morrey} to
study the local behavior of solutions to second order elliptic partial differential
equations. For the boundedness of the Hardy-Littlewood maximal operator,
the fractional integral operator and the Calder\'{o}n-Zygmund singular integral
operator on these spaces, we refer the readers to \cite{Adams, ChFra, Peetre}. For the properties
and applications of classical Morrey spaces, see \cite{ChFraL1, ChFraL2, FazRag2, FazPalRag} and references therein.

In the paper, we prove the boundedness of the operators $I_{\Omega,\a}$ from one
generalized local Morrey space $LM_{p,\varphi_1}^{\{x_0\}}$ to
$LM_{q,\varphi_2}^{\{x_0\}}$, $1<p< q<\infty$, $1/p-1/q=\a/n$, and from the space
$LM_{1,\varphi_1}^{\{x_0\}}$ to the weak space $WLM_{q,\varphi_2}^{\{x_0\}}$, $1< q<\infty$, $1-1/q=\a/n$.
In the case $b \in CBMO_{p_2}$, we find the sufficient
conditions on the pair $(\varphi_1,\varphi_2)$ which
ensures the boundedness of the commutator operators $[b, I_{\Omega,\a}]$ from
$LM_{p_1,\varphi_1}^{\{x_0\}}$ to $LM_{q,\varphi_2}^{\{x_0\}}$, $1< p <\infty$, $\frac{1}{p}=\frac{1}{p_1}+\frac{1}{p_2}$, $\frac{1}{q}=\frac{1}{p}-\frac{\a}{n}$, $\frac{1}{q_1}=\frac{1}{p_1}-\frac{\a}{n}$.

By $A \lesssim B$ we mean that $A \le C B$ with some positive constant $C$
independent of appropriate quantities. If $A \lesssim B$ and $B \lesssim A$, we
write $A\approx B$ and say that $A$ and $B$ are  equivalent.


%
%

\

\section{Generalized local Morrey spaces }

We find it convenient to define the generalized Morrey spaces in
the form as follows.
\begin{defn}
Let $\varphi(x,r)$ be a positive measurable function on $\Rn \times (0,\infty)$ and $1 \le p < \infty$. We denote by
$M_{p,\varphi} \equiv M_{p,\varphi}(\Rn)$ the generalized Morrey space, the space of all
functions $f\in L_p^{\rm loc}(\Rn)$ with finite quasinorm
$$
\|f\|_{M_{p,\varphi}} = \sup\limits_{x\in\Rn, r>0} \varphi(x,r)^{-1} \, |B(x,r)|^{-\frac{1}{p}} \, \|f\|_{L_p(B(x,r))}.
$$
Also by $WM_{p,\varphi} \equiv WM_{p,\varphi}(\Rn)$ we denote the weak
generalized Morrey space of all functions $f\in WL_p^{\rm loc}(\Rn)$ for which
$$
\|f\|_{WM_{p,\varphi}} = \sup\limits_{x\in\Rn, r>0} \varphi(x,r)^{-1} \,
|B(x,r)|^{-\frac{1}{p}} \, \|f\|_{WL_p(B(x,r))} < \infty.
$$
\end{defn}

According to this definition, we recover the Morrey space $M_{p,\lambda}$ and weak Morrey space
$WM_{p,\lambda}$ under the choice $\varphi(x,r)=r^{\frac{\lambda-n}{p}}$:
$$
M_{p,\lambda}=
M_{p,\varphi}\Big|_{\varphi(x,r)=r^{\frac{\lambda-n}{p}}},
~~~~~~~~
WM_{p,\lambda}=
WM_{p,\varphi}\Big|_{\varphi(x,r)=r^{\frac{\lambda-n}{p}}}.
$$

\begin{defn}
Let $\varphi(x,r)$ be a positive measurable function on $\Rn \times (0,\infty)$ and $1 \le p < \infty$. We denote by $LM_{p,\varphi} \equiv LM_{p,\varphi}(\Rn)$
the generalized local Morrey space, the space of all functions $f\in L_p^{\rm loc}(\Rn)$ with finite quasinorm
$$
\|f\|_{LM_{p,\varphi}} = \sup\limits_{r>0}
\varphi(0,r)^{-1} \, |B(0,r)|^{-\frac{1}{p}} \, \|f\|_{L_p(B(0,r))}.
$$
Also by $WLM_{p,\varphi} \equiv WLM_{p,\varphi}(\Rn)$ we denote the weak generalized Morrey space of all functions $f\in WL_p^{\rm loc}(\Rn)$ for which
$$
\|f\|_{WLM_{p,\varphi}}  = \sup\limits_{r>0} \varphi(0,r)^{-1} \, |B(0,r)|^{-\frac{1}{p}} \, \|f\|_{WL_p(B(0,r))} < \infty.
$$
\end{defn}

\begin{defn}
Let $\varphi(x,r)$ be a positive measurable function on $\Rn \times (0,\infty)$ and $1 \le p < \infty$. For any fixed $x_0 \in \Rn$ we denote by
$LM_{p,\varphi}^{\{x_0\}} \equiv LM_{p,\varphi}^{\{x_0\}}(\Rn)$ the generalized local Morrey space, the space of all
functions $f\in L_p^{\rm loc}(\Rn)$ with finite quasinorm
$$
\|f\|_{LM_{p,\varphi}^{\{x_0\}}} = \|f(x_0+\cdot)\|_{LM_{p,\varphi}}.
$$
Also by $WLM_{p,\varphi}^{\{x_0\}} \equiv WLM_{p,\varphi}^{\{x_0\}}(\Rn)$ we denote the weak
generalized Morrey space of all functions $f\in WL_p^{\rm loc}(\Rn)$ for which
$$
\|f\|_{WLM_{p,\varphi}^{\{x_0\}}}  = \|f(x_0+\cdot)\|_{WLM_{p,\varphi}} < \infty.
$$
\end{defn}

According to this definition, we recover the local Morrey space $LM_{p,\lambda}^{\{x_0\}}$ and
weak local Morrey space $WLM_{p,\lambda}^{\{x_0\}}$ under the choice
$\varphi(x_0,r)=r^{\frac{\lambda-n}{p}}$:
$$
LM_{p,\lambda}^{\{x_0\}}=LM_{p,\varphi}^{\{x_0\}}\Big|_{\varphi(x_0,r)=r^{\frac{\lambda-n}{p}}},
~~~~~~
WLM_{p,\lambda}^{\{x_0\}}=WLM_{p,\varphi}^{\{x_0\}}\Big|_{\varphi(x_0,r)=r^{\frac{\lambda-n}{p}}}.
$$

Wiener \cite{Wiener1, Wiener2} looked for a way to describe the behavior of a function at the infinity. The
conditions he considered are related to appropriate weighted $L_q$ spaces. Beurling \cite{Beurl} extended
this idea and defined a pair of dual Banach spaces $A_{q}$ and $B_{q'}$, where $1/q + 1/q' = 1$. To be
precise, $A_q$ is a Banach algebra with respect to the convolution, expressed as a union of certain
weighted $L_q$ spaces; the space $B_{q'}$ is expressed as the intersection of the corresponding weighted $L_{q'}$ spaces.
Feichtinger \cite{Feicht} observed that the space $B_q$ can be described by
\begin{equation} \label{ank01}
\left\| f\right\|_{B_{q}} = \sup_{k \ge 0} 2^{-\frac{kn}{q}} \|f \chi_{k}\|_{L_{q}(\Rn)},
\end{equation}
where $\chi_{0}$ is the characteristic function of the unit ball $\{x \in \Rn: |x| \le 1 \}$, $\chi_{k}$ is the characteristic
function of the annulus $\{x \in \Rn : 2^{k-1} < |x| \le 2^{k}\}$, $k = 1, 2, \ldots$.
By duality, the space $A_{q}(\Rn)$, called Beurling algebra now, can be described by
\begin{equation} \label{ank02}
\left\| f\right\|_{A_{q}} = \sum\limits_{k=0}^{\i} 2^{-\frac{kn}{q'}} \|f \chi_{k}\|_{L_{q}(\Rn)}.
\end{equation}

Let $\dot{B}_{q}(\Rn)$ and $\dot{A}_{q}(\Rn)$ be the homogeneous versions of $B_{q}(\Rn)$ and $A_{q}(\Rn)$ by taking $k \in \mathbb{Z}$ in \eqref{ank01} and
\eqref{ank02} instead of $k \ge 0$ there.


If $\lambda<0$ or $\lambda>n$, then $LM_{p,\lambda}^{\{x_0\}}(\Rn)={\Theta}$, where $\Theta$ is the set of all functions equivalent to $0$ on $\Rn$.
Note that $LM_{p,0}(\Rn)=L_{p}(\Rn)$ and $LM_{p,n}(\Rn)=\dot{B}_{p}(\Rn)$.
$$
\dot{B}_{p,\mu}=LM_{p,\varphi}\Big|_{\varphi(0,r)=r^{\mu n}},
~~~~~~
W\dot{B}_{p,\mu}=WLM_{p,\varphi}\Big|_{\varphi(0,r)=r^{\mu n}}.
$$

Alvarez, Guzman-Partida and Lakey \cite{AlvLanLakey}
in order to study the relationship between central $BMO$ spaces and Morrey spaces, they introduced $\lambda$-central bounded mean oscillation spaces and central Morrey spaces
$\dot{B}_{p,\mu}(\Rn) \equiv LM_{p,n+np\mu}(\Rn)$, $\mu \in [-\frac{1}{p},0]$. If $\mu<-\frac{1}{p}$ or $\mu>0$, then $\dot{B}_{p,\mu}(\Rn)={\Theta}$. Note that $\dot{B}_{p,-\frac{1}{p}}(\Rn)=L_{p}(\Rn)$ and $\dot{B}_{p,0}(\Rn)=\dot{B}_{p}(\Rn)$. Also define the weak central Morrey spaces $W\dot{B}_{p,\mu}(\Rn) \equiv WLM_{p,n+np\mu}(\Rn)$.

Inspired by this, we consider the boundedness of fractional integral operator with rough kernel on generalized
local Morrey spaces and give the central bounded mean oscillation estimates for their commutators.


\

\section{ Fractional integral operator with rough kernels in the spaces $LM_{p,\varphi}^{\{x_0\}}$}

In this section we are going to use the following statement on the boundedness
of the weighted Hardy operator
$$
H^{\ast}_{w} g(t):=\int_t^{\infty} g(s) w(s) ds,~ 0<t<\infty,
$$
where $w$ is a fixed function non-negative and measurable on $(0,\i)$.
\begin{thm}\label{thm3.2.}
Let $v_1$, $v_2$ and $w$ be positive almost everywhere and measurable functions on $(0,\i)$. The inequality
\begin{equation} \label{vav01}
\ess_{t>0} v_2(t) H^{\ast}_{w} g(t) \leq C \ess_{t>0} v_1(t) g(t)
\end{equation}
holds for some $C>0$ for all non-negative and non-decreasing $g$ on $(0,\i)$ if and
only if
\begin{equation} \label{vav02}
B:= \ess_{t>0} v_2(t)\int_t^{\infty} \frac{w(s) ds}{\ess_{s<\tau<\infty} v_1(\tau)}<\i.
\end{equation}

Moreover, if $C^{\ast}$ is the minimal value of $C$ in \eqref{vav01}, then $C^{\ast}=B$.
\end{thm}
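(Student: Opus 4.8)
The plan is to prove the two one-sided estimates $C^{\ast}\le B$ (sufficiency) and $B\le C^{\ast}$ (necessity) separately, the whole argument resting on a single monotonicity observation. Throughout I write $V(s):=\ess_{s<\tau<\infty} v_1(\tau)$, which is non-increasing in $s$ since the set over which the supremum is taken shrinks.

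For sufficiency I fix a non-negative non-decreasing $g$ and set $A:=\ess_{t>0} v_1(t)g(t)$, which I may assume finite (otherwise \eqref{vav01} is trivial). The key step is a pointwise domination of $g$: because $g$ is non-decreasing, $g(s)\le g(\tau)$ for a.e.\ $\tau>s$, while $v_1(\tau)g(\tau)\le A$ a.e.; combining these gives $g(s)\le A/v_1(\tau)$ for a.e.\ $\tau>s$, and taking the essential infimum over $\tau\in(s,\infty)$ yields
\[
g(s)\le \frac{A}{\ess_{s<\tau<\infty} v_1(\tau)}=\frac{A}{V(s)}.
\]
Substituting this into $H^{\ast}_{w}g(t)=\int_t^{\infty} g(s)w(s)\,ds$, multiplying by $v_2(t)$ and taking $\ess_{t>0}$ produces exactly $A\cdot B$, which is \eqref{vav01} with $C\le B$; hence $C^{\ast}\le B$.

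For necessity I introduce the test function $g_0(s):=1/V(s)$. Since $V$ is non-increasing, $g_0$ is non-negative and non-decreasing, so it is admissible, and it is finite because the hypothesis $v_1>0$ a.e.\ forces $V(s)>0$. The crucial auxiliary fact is that $v_1(s)\le V(s)$ for a.e.\ $s$, whence $\ess_{s>0} v_1(s)g_0(s)\le 1$. Granting this, I substitute $g_0$ into \eqref{vav01}: its right-hand side is at most $C$, while the left-hand side equals
\[
\ess_{t>0} v_2(t)\int_t^{\infty}\frac{w(s)\,ds}{V(s)}
=\ess_{t>0} v_2(t)\int_t^{\infty}\frac{w(s)\,ds}{\ess_{s<\tau<\infty} v_1(\tau)}=B,
\]
so $B\le C$ and therefore $B\le C^{\ast}$. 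Combined with the first part this gives $C^{\ast}=B$.

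I expect the main obstacle to be the auxiliary inequality $v_1\le V$ a.e., because $V(s)$ excludes the value of $v_1$ at $s$ itself and a naive pointwise comparison is false. I would establish it via approximate continuity: at a.e.\ $s$ and for every $\varepsilon>0$ the set $\{\tau>s:\ v_1(\tau)>v_1(s)-\varepsilon\}$ has positive measure in every right-neighborhood of $s$, so $V(s)\ge v_1(s)-\varepsilon$, and letting $\varepsilon\to0$ yields the claim. Finally I would record the degenerate cases: if $B=\infty$ the necessity construction shows no finite $C$ can satisfy \eqref{vav01}, so $C^{\ast}=\infty=B$, and if $A=\infty$ in the sufficiency direction there is nothing to prove.
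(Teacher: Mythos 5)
Your proof is correct and follows essentially the same route as the paper: the sufficiency step exploits the monotonicity of $g$ to get $g(s)\,\ess_{s<\tau<\infty}v_1(\tau)\le \ess_{t>0} v_1(t)g(t)$ (the paper phrases this via the identity \eqref{vav03} after multiplying and dividing by $\ess_{s<\tau<\infty}v_1(\tau)$ inside the integral), and the necessity step uses the identical test function $g(t)=1/\ess_{t<\tau<\infty}v_1(\tau)$. In fact your write-up is slightly more careful than the paper's, since you explicitly isolate and justify (via approximate continuity) the inequality $v_1(t)\le \ess_{t<\tau<\infty}v_1(\tau)$ a.e., which the paper uses silently in its final bound $\ess_{t>0} v_1(t)/\ess_{t<\tau<\infty}v_1(\tau)\le 1$.
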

\begin{proof}
{\it Sufficiency.} Assume that \eqref{vav02} holds. Whenever $F$, $G$ are non-negative
functions on $(0,\i)$ and $F$ is non-decreasing, then
\begin{equation} \label{vav03}
\ess_{t>0} F(t) G(t) = \ess_{t>0} F(t) \ess_{s>t} G(s), ~~~ t>0.
\end{equation}
By \eqref{vav03} we have
\begin{align*}
\ess_{t>0} v_2(t) H^{\ast}_{w} g(t) & = \ess_{t>0} v_2(t) \int_t^{\infty} g(s) w(s) \,
\frac{\ess_{s<\tau<\i} v_1(\tau)}{\ess_{s<\tau<\i} v_1(\tau)} \, ds
\\
& \le \ess_{t>0} v_2(t) \int_t^{\infty} \frac{w(s) ds}{\ess_{s<\tau<\i} v_1(\tau)} \, \ess_{t>0} g(t)
\, \ess_{t<\tau<\i} v_1(\tau)
\\
& = \ess_{t>0} v_2(t) \int_t^{\infty} \frac{w(s) ds}{\ess_{s<\tau<\i} v_1(\tau)} \, \ess_{t>0} g(t) v_1(t)
\\
& \le B \, \ess_{t>0} g(t) v_1(t).
\end{align*}

{\it Necessity.} Assume that the inequality \eqref{vav01} holds. The function
\begin{equation*}
g(t) = \frac{1}{\ess_{t<\tau<\i} v_1(\tau)}, ~~ t>0
\end{equation*}
is nonnegative and non-decreasing on $(0,\i)$. Thus
\begin{equation*}
B=\ess_{t>0} v_2(t) \int_t^{\infty} \frac{w(s) ds}{\ess_{s<\tau<\i} v_1(\tau)}
\le C \ess_{t>0} \frac{v_1(t)}{\ess_{t<\tau<\i} v_1(\tau)} \le C,
\end{equation*}
hence $C^{\ast}=B$.
\end{proof}

In \cite{DingYZ} the following statements was proved by fractional integral operator with rough kernels $I_{\Omega,\a}$, containing the result in \cite{Miz, Nakai}.
\begin{thm}  \label{nakaiPot}
Suppose that $\Omega \in L_s(S^{n-1})$, $ 1<s\leq\infty$, be a homogeneous of degree zero. Let $0 < \a < n$, $1 \le s' < p < \frac{n}{\a}$, $\frac{1}{q}=\frac{1}{p}-\frac{\a}{n}$ and $\varphi(x,r)$ satisfy
 conditions
 \begin{equation} \label{nakcond}
 c^{-1} \varphi(x,r)\le \varphi(x,t)\le c \, \varphi(x,r)
 \end{equation}
whenever $r \le  t \le 2r$, where $c~(\ge 1)$ does not depend on $t$, $r$, $x\in \Rn$ and
\begin{equation} \label{MizNPot}
 \int_r^\infty t^{\a p} \varphi(x,t)^p \frac{dt}{t} \le C \,r^{\a p} \varphi(x,r)^p,
\end{equation}
where $C$ does not depend on $x$ and $r$.
Then the operators $M_{\Omega,\a}$ and $I_{\Omega,\a}$ are bounded from $M_{p,\varphi}$ to $M_{q,\varphi}$.
\end{thm}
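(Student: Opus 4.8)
The plan is to establish a pointwise "local estimate" on each ball $B(x,r)$ and then reduce the Morrey bound to the Hardy-type inequality of Theorem~\ref{thm3.2.}.

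First I would fix a ball $B = B(x,r)$ and split the argument $f = f_1 + f_2$, where $f_1 = f\chi_{2B}$ and $f_2 = f\chi_{\dual(2B)}$. For the local part $f_1$, I would invoke Theorem~A (the $L_p \to L_q$ boundedness of $I_{\Omega,\a}$ and $M_{\Omega,\a}$ on all of $\Rn$) to get
\begin{equation*}
\|I_{\Omega,\a} f_1\|_{L_q(B)} \le \|I_{\Omega,\a} f_1\|_{L_q(\Rn)} \lesssim \|f_1\|_{L_p(\Rn)} = \|f\|_{L_p(2B)}.
\end{equation*}
The main work is the tail part $f_2$. Here the key step is a pointwise bound: for $z \in B$, using $\Omega\in L_s(S^{n-1})$, H\"older's inequality on dyadic annuli, and homogeneity of degree zero of $\Omega$, I would show
\begin{equation*}
|I_{\Omega,\a} f_2(z)| \lesssim \int_{2r}^{\infty} t^{\a} \|f\|_{L_p(B(x,t))} \, |B(x,t)|^{-\frac1p} \, \frac{dt}{t},
\end{equation*}
and similarly for $M_{\Omega,\a} f_2(z)$. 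The geometric point is that for $y\in\dual(2B)$ and $z\in B$ one has $|z-y|\approx |x-y|$, so $|z-y|^{\a-n}$ can be replaced by $|x-y|^{\a-n}$; decomposing $\dual(2B)$ into annuli $\{2^{k}r \le |x-y| < 2^{k+1}r\}$, applying H\"older with exponent $s$ to separate $\Omega$ from $f$, and summing yields the displayed integral.

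Combining the two parts, and using $q>p$ together with the normalization $\frac1q = \frac1p - \frac\a n$ to absorb the local term into the integral, I expect to arrive at the fundamental estimate
\begin{equation*}
\|I_{\Omega,\a} f\|_{L_q(B(x,r))} \lesssim |B(x,r)|^{\frac1q} \int_{r}^{\infty} \|f\|_{L_p(B(x,t))} \, |B(x,t)|^{-\frac1p} \, \frac{dt}{t},
\end{equation*}
valid uniformly in $x$ and $r$ (and the same for $M_{\Omega,\a}$). To pass from this to the Morrey bound, I would divide by $\varphi(x,r)^p$ and $|B(x,r)|^{1/q}$, take the supremum over $r$, and recognize the right-hand side as a weighted Hardy operator $H^{\ast}_{w}$ acting on the non-decreasing function $g(t) = |B(x,t)|^{-1/p}\|f\|_{L_p(B(x,t))}$. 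Here condition~\eqref{MizNPot} is exactly what guarantees the finiteness of the constant $B$ in~\eqref{vav02}, so Theorem~\ref{thm3.2.} delivers
\begin{equation*}
\sup_{r>0}\varphi(x,r)^{-1}|B(x,r)|^{-\frac1q}\|I_{\Omega,\a}f\|_{L_q(B(x,r))} \lesssim \|f\|_{M_{p,\varphi}}
\end{equation*}
for each $x$, and taking the supremum over $x$ gives the $M_{p,\varphi}\to M_{q,\varphi}$ bound.

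The hardest step will be the tail estimate for $f_2$, specifically handling the rough kernel correctly: one must split by H\"older with the exponent $s$ coming from $\Omega\in L_s(S^{n-1})$, control the factor $\bigl(\int_{S^{n-1}}|\Omega|^s\bigr)^{1/s}$ uniformly, and verify that the resulting exponents combine so that the annular sum telescopes into the claimed integral. The hypothesis $s'<p$ is what makes H\"older's inequality on each annulus legitimate with $f\in L_p$; one must check that $p/s' > 1$ (equivalently $s'<p$) so the dual exponent is finite. The remaining reduction to Theorem~\ref{thm3.2.} is then essentially formal, with~\eqref{nakcond} used only to pass freely between $\varphi(x,r)$ and $\varphi(x,t)$ for $t\approx r$.
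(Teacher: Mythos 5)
Your plan is, in outline, the paper's own machinery: the splitting $f=f_1+f_2$ on $2B$, Theorem~A for $f_1$, the Fubini--H\"older annulus estimate for $f_2$, and the reduction to the weighted Hardy inequality of Theorem~\ref{thm3.2.} are exactly what the paper does in Lemma~\ref{lem3.3.Pot}, Theorem~\ref{3.4.Pot} and Corollary~\ref{3.4.PotG}. (Note the paper never proves Theorem~\ref{nakaiPot} itself; it is quoted from \cite{DingYZ}.) Two slips in your write-up before the main point: the factor $t^{\a}$ present in your pointwise tail bound has silently vanished from your ``fundamental estimate'', which should read
\begin{equation*}
\|I_{\Omega,\a} f\|_{L_q(B(x,r))}\ \lesssim\ |B(x,r)|^{\frac{1}{q}}\int_{r}^{\i} t^{\a}\,|B(x,t)|^{-\frac{1}{p}}\,\|f\|_{L_p(B(x,t))}\,\frac{dt}{t}\,;
\end{equation*}
and the function $t\mapsto |B(x,t)|^{-1/p}\|f\|_{L_p(B(x,t))}$ is in general \emph{not} non-decreasing (try $f=\chi_{B(x,1)}$), so Theorem~\ref{thm3.2.} cannot be applied to it as you state; one must take $g(t)=\|f\|_{L_p(B(x,t))}$, which is non-decreasing, and absorb $t^{-n/p}$ into the weight $v_1$, as in the paper's proof of Theorem~\ref{3.4.Pot}.

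The genuine gap is the step you dismiss as ``essentially formal'': the claim that \eqref{MizNPot} is exactly what guarantees $B<\i$ in \eqref{vav02}. It is not. With the corrected weights $v_2(t)=\varphi(x,t)^{-1}$, $v_1(t)=\varphi(x,t)^{-1}t^{-n/p}$, $w(t)=t^{-n/q-1}$, condition \eqref{vav02} becomes
\begin{equation*}
\sup_{r>0}\ \varphi(x,r)^{-1}\int_r^{\i}\Big(\es_{t<\tau<\i}\varphi(x,\tau)\,\tau^{\frac{n}{p}}\Big)\,\frac{dt}{t^{\frac{n}{q}+1}}\ <\ \i .
\end{equation*}
Now test $\varphi(x,t)=t^{-n/p}$: it satisfies \eqref{nakcond}, and it satisfies \eqref{MizNPot} because $\int_r^{\i} t^{\a p-n-1}\,dt=(n-\a p)^{-1}r^{\a p}\varphi(x,r)^p$ (recall $\a p<n$); yet $\varphi(x,\tau)\tau^{n/p}\equiv1$, so the displayed quantity equals $\tfrac{q}{n}\sup_{r>0}r^{\a}=\i$. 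Thus the hypotheses of Theorem~\ref{nakaiPot} do not imply the hypothesis of Theorem~\ref{thm3.2.}, and your reduction collapses at its last step. The obstruction is structural, not a missing computation: the factor $r^{\a p}$ on the right of \eqref{MizNPot} is calibrated, in this paper's normalization $\varphi(x,r)^{-1}|B(x,r)|^{-1/p}\|f\|_{L_p(B(x,r))}$, to the \emph{target} weight $r^{\a}\varphi(x,r)$; what \eqref{nakcond} and \eqref{MizNPot} actually yield is $\int_r^{\i}t^{\a-1}\varphi(x,t)\,dt\lesssim r^{\a}\varphi(x,r)$, hence boundedness $M_{p,\varphi}\to M_{q,r^{\a}\varphi}$ (for $\varphi=r^{(\lambda-n)/p}$ this is precisely Spanne's exponent $\mu=q\lambda/p$). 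For the same test function the literal statement cannot hold at all: $M_{p,\varphi}=L_p$, while $M_{q,\varphi}=\Theta$ consists only of null functions, since $\sup_{r>0}r^{\a}\|g\|_{L_q(B(x,r))}<\i$ forces $g=0$. So the gap is not repairable along your route (or any other): a correct proof must either be carried out in the normalization of \cite{DingYZ}, where source and target are measured against the same growth function, or must conclude with the target space $M_{q,r^{\a}\varphi}$ rather than $M_{q,\varphi}$.
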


The following statements, containing results obtained in \cite{Miz},
\cite{Nakai} was proved in \cite{GulDoc, GulJIA} (see also \cite{BurGulHus1}-\cite{BurGogGulMus2}, \cite{GulBook, GULAKShIEOT2012}).
\begin{thm}\label{nakaiVagifPot0}
Let $0 < \a < n$, $1 \le p < \frac{n}{\a}$, $\frac{1}{q}=\frac{1}{p}-\frac{\a}{n}$
and $(\varphi_1,\varphi_2)$ satisfy the condition
\begin{equation}\label{GulZPot}
\int_r^{\infty} t^{\a-1} \varphi_1(0,t) dt \le  C \, \varphi_2(0,r),
\end{equation}
where $C$ does not depend on $r$. Then the operators $M_{\a}$ and $I_{\a}$ are
bounded  from $LM_{p,\varphi_1}$ to $LM_{q,\varphi_2}$ for $p > 1$ and
from $LM_{1,\varphi_1}$ to $WLM_{q,\varphi_2}$ for $p=1$.
\end{thm}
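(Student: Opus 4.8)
The plan is to reduce everything to a single \emph{local estimate} that controls $\|I_{\a}f\|_{L_q(B(0,r))}$ by an integral of $\|f\|_{L_p(B(0,t))}$ over $t\in(2r,\i)$, and then to convert that estimate into the Morrey bound by invoking the Hardy-type inequality of Theorem~\ref{thm3.2.}. Since $M_{\a}f(x)\lesssim I_{\a}(|f|)(x)$ pointwise, it suffices to treat $I_{\a}$; the maximal operator inherits every bound. Also, fixing $x_0=0$ costs nothing, since $LM_{p,\varphi}^{\{x_0\}}$ is defined by translation.

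For fixed $r>0$ I would split $f=f_1+f_2$ with $f_1=f\chi_{B(0,2r)}$ and $f_2=f\chi_{\Rn\setminus B(0,2r)}$. For the near part, the classical Hardy--Littlewood--Sobolev bound (the case $\Omega\equiv 1$ of Theorem~A) gives $\|I_{\a}f_1\|_{L_q(B(0,r))}\le\|I_{\a}f_1\|_{L_q(\Rn)}\lesssim\|f\|_{L_p(B(0,2r))}$ when $p>1$, and the corresponding weak $(1,q)$ estimate when $p=1$. Because $t\mapsto\|f\|_{L_p(B(0,t))}$ is non-decreasing and $r^{n/q}\int_{2r}^{\i}t^{-n/q-1}\,dt\approx 1$, this near term is itself dominated by $r^{n/q}\int_{2r}^{\i}t^{-\frac nq-1}\|f\|_{L_p(B(0,t))}\,dt$.

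For the far part, if $x\in B(0,r)$ and $y\notin B(0,2r)$ then $|x-y|\approx|y|$, so $|I_{\a}f_2(x)|\lesssim\int_{\Rn\setminus B(0,2r)}|y|^{\a-n}|f(y)|\,dy$. Writing $|y|^{\a-n}\approx\int_{|y|}^{\i}t^{\a-n-1}\,dt$, applying Fubini, and then Hölder's inequality on $B(0,t)$ (with $|B(0,t)|^{1/p'}\approx t^{n/p'}$) yields $|I_{\a}f_2(x)|\lesssim\int_{2r}^{\i}t^{\a-1-\frac np}\|f\|_{L_p(B(0,t))}\,dt=\int_{2r}^{\i}t^{-\frac nq-1}\|f\|_{L_p(B(0,t))}\,dt$, where the exponent identity uses $\a-1-\frac np=-\frac nq-1$. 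Taking the $L_q(B(0,r))$ norm (the bound is constant in $x$) multiplies by $|B(0,r)|^{1/q}\approx r^{n/q}$. Combining the two parts gives the local estimate $\|I_{\a}f\|_{L_q(B(0,r))}\lesssim r^{\frac nq}\int_{2r}^{\i}t^{-\frac nq-1}\|f\|_{L_p(B(0,t))}\,dt$, with the weak norm on the left when $p=1$.

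Finally I would divide by $\varphi_2(0,r)|B(0,r)|^{1/q}\approx\varphi_2(0,r)\,r^{n/q}$ and take the supremum over $r>0$. With $g(t):=\|f\|_{L_p(B(0,t))}$ (non-negative, non-decreasing), $v_2(r):=\varphi_2(0,r)^{-1}$, $v_1(t):=\varphi_1(0,t)^{-1}t^{-n/p}$ and $w(t):=t^{-n/q-1}$, the right-hand side is dominated (after enlarging $\int_{2r}^{\i}$ to $\int_r^{\i}=H^{\ast}_{w}$) by $\ess_{r>0}v_2(r)H^{\ast}_{w}g(r)$, while $\ess_{t>0}v_1(t)g(t)\approx\|f\|_{LM_{p,\varphi_1}}$. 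Theorem~\ref{thm3.2.} then reduces the claim to checking that $B$ in \eqref{vav02} is finite: bounding $\ess_{s<\tau<\i}v_1(\tau)$ in the denominator from below by $v_1(s)=\varphi_1(0,s)^{-1}s^{-n/p}$ and using $-\frac nq-1+\frac np=\a-1$ gives $B\le\ess_{r>0}\varphi_2(0,r)^{-1}\int_r^{\i}s^{\a-1}\varphi_1(0,s)\,ds$, which is finite exactly by hypothesis \eqref{GulZPot}. I expect this last reduction---correctly handling the essential supremum in \eqref{vav02} and matching it to the clean integral condition \eqref{GulZPot}---to be the main technical point, whereas the local estimate itself is routine once the dyadic-tail decomposition is set up.
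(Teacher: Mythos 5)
Your proof is correct and is essentially the route this paper itself takes: although the paper only quotes this theorem from \cite{GulDoc, GulJIA}, its proof of the more general rough-kernel result (Theorem \ref{3.4.Pot}) is exactly your argument, namely the local estimate of Lemma \ref{lem3.3.Pot} (your near/far decomposition, specialized to $\Omega\equiv 1$) fed into the weighted Hardy inequality of Theorem \ref{thm3.2.} with the same choices of $g$, $v_1$, $v_2$, $w$. Your closing step---deducing finiteness of $B$ in \eqref{vav02} from \eqref{GulZPot} by bounding the essential supremum in the denominator below by $v_1(s)$ (an inequality valid for a.e.\ $s$, which is all the integral requires)---is precisely the content of the paper's remark that condition \eqref{eq3.6.VZPot} is weaker than \eqref{GulZPot}, so the proposal is complete.
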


\begin{lem}\label{lem3.3.Pot}
Suppose that $x_0 \in \Rn$, $\Omega \in L_s(S^{n-1})$, $ 1<s\leq\infty$, be a homogeneous of degree zero.
Let $0 < \a < n$, $1 \le p < \frac{n}{\a}$, and $\frac{1}{q}=\frac{1}{p}-\frac{\a}{n}$.
Then, for $p>1$ and $s' \le p$ or $q < s$ the inequality
\begin{equation*}\label{eq3.5.}
\|I_{\Omega,\a} f\|_{L_q(B(x_0,r))} \lesssim r^{\frac{n}{q}} \int_{2r}^{\i} t^{-\frac{n}{q}-1} \|f\|_{L_p(B(x_0,t))}dt
\end{equation*}
holds for any ball $B(x_0,r)$ and for all $f\in\Lploc$.


Moreover, for $p=1<q<s$ the inequality
\begin{equation}\label{eq3.5.WXM0}
\|I_{\Omega,\a} f\|_{WL_q(B(x_0,r))} \lesssim r^{\frac{n}{q}} \int_{2r}^{\i} t^{-\frac{n}{q}-1} \|f\|_{L_1(B(x_0,t))}dt,
\end{equation}
holds for any ball $B(x_0,r)$ and for all $f\in\L1loc$.

\end{lem}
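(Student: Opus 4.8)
The plan is to localize by splitting $f$ and to reduce the local piece to the global boundedness already recorded in Theorem A, while treating the tail by a direct kernel estimate. Fix the ball $B:=B(x_0,r)$, set $2B:=B(x_0,2r)$, and write $f=f_1+f_2$ with $f_1=f\,\chi_{2B}$ and $f_2=f\,\chi_{\Rn\setminus 2B}$. Then $I_{\Omega,\a}f=I_{\Omega,\a}f_1+I_{\Omega,\a}f_2$, and it suffices to bound each summand in $L_q(B)$ (respectively $WL_q(B)$ when $p=1$). For the local piece I would invoke Theorem A: under the present hypotheses $I_{\Omega,\a}$ maps $L_p(\Rn)$ boundedly into $L_q(\Rn)$ (and $L_1$ into $WL_q$), so that $\|I_{\Omega,\a}f_1\|_{L_q(B)}\le\|I_{\Omega,\a}f_1\|_{L_q(\Rn)}\lesssim\|f\|_{L_p(2B)}$. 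It then remains to absorb $\|f\|_{L_p(2B)}$ into the right-hand side, which is the elementary observation that $t\mapsto\|f\|_{L_p(B(x_0,t))}$ is nondecreasing and $\int_{2r}^\infty t^{-\frac{n}{q}-1}\,dt\approx r^{-\frac{n}{q}}$, whence $\|f\|_{L_p(2B)}\lesssim r^{\frac{n}{q}}\int_{2r}^\infty t^{-\frac{n}{q}-1}\|f\|_{L_p(B(x_0,t))}\,dt$.

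The essential work is the tail $f_2$. For $x\in B$ and $y\in\Rn\setminus 2B$ one has $|x-y|\approx|x_0-y|$, so that $|I_{\Omega,\a}f_2(x)|\lesssim\int_{\Rn\setminus 2B}\frac{|\Omega(x-y)|}{|x_0-y|^{n-\a}}\,|f(y)|\,dy$; using $|x_0-y|^{\a-n}\approx\int_{|x_0-y|}^\infty t^{\a-n-1}\,dt$ and Fubini's theorem this is dominated by $\int_{2r}^\infty t^{\a-n-1}\left(\int_{B(x_0,t)}|\Omega(x-y)|\,|f(y)|\,dy\right)dt$. Everything now hinges on the inner estimate $\int_{B(x_0,t)}|\Omega(x-y)|\,|f(y)|\,dy\lesssim t^{\frac{n}{p'}}\|f\|_{L_p(B(x_0,t))}$, since $\a-n-1+\frac{n}{p'}=-\frac{n}{q}-1$.

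When $s'\le p$ this inner bound is immediate. Hölder's inequality with exponents $(p,p')$ reduces it to estimating $\left(\int_{B(x_0,t)}|\Omega(x-y)|^{p'}\,dy\right)^{1/p'}$, and since $\Omega$ is homogeneous of degree zero, passing to polar coordinates gives this quantity $\lesssim t^{\frac{n}{p'}}\|\Omega\|_{L_{p'}(S^{n-1})}\lesssim t^{\frac{n}{p'}}\|\Omega\|_{L_s(S^{n-1})}$, the last step being the embedding $L_s(S^{n-1})\hookrightarrow L_{p'}(S^{n-1})$, valid precisely because $p'\le s$. This bound is uniform in $x\in B$, so integrating in $x$ contributes the factor $|B|^{1/q}\approx r^{\frac{n}{q}}$ and yields the stated inequality.

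The case $q<s$, where one may have $p'>s$, is where the difficulty lies and is the step I expect to be the main obstacle: now $\Omega(x-\cdot)$ need not belong to $L_{p'}$ locally, the pointwise-in-$x$ Hölder argument is unavailable, and one cannot keep $f$ in $L_p$ on each shell by the same device. My plan is instead to estimate the $L_q(B)$-norm of $I_{\Omega,\a}f_2$ as a whole: apply Minkowski's integral inequality to move the $L_q(B)$-norm inside the $y$-integral, and then control $\|\Omega(\cdot-y)\|_{L_q(B)}$ by Hölder's inequality (legitimate since $q<s$) together with the homogeneity of $\Omega$ and the embedding $L_s(S^{n-1})\hookrightarrow L_q(S^{n-1})$; the delicate part will be to carry out this spherical estimate sharply enough that, after reinstating the factor $|x_0-y|^{\a-n}$ and summing over the dyadic shells $2^{k}r\le|x_0-y|<2^{k+1}r$, one recovers exactly the weight $t^{-\frac{n}{q}-1}\|f\|_{L_p(B(x_0,t))}$. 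Finally, for $p=1<q<s$ the two steps combine as before: Theorem A supplies the $L_1\to WL_q$ bound for $I_{\Omega,\a}f_1$, while the pointwise control of $I_{\Omega,\a}f_2$ is a strong bound that dominates the weak quasinorm on $B$, so that adding the two pieces gives \eqref{eq3.5.WXM0}.
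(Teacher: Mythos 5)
Your decomposition $f=f_1+f_2$, your treatment of $f_1$ via Theorem A together with the absorption estimate $\|f\|_{L_p(2B)}\lesssim r^{\frac nq}\int_{2r}^{\infty}t^{-\frac nq-1}\|f\|_{L_p(B(x_0,t))}\,dt$, and your handling of $f_2$ in the case $s'\le p$ (pointwise reduction via $|x-y|\approx|x_0-y|$, Fubini, then H\"older) coincide with the paper's proof; the paper runs the H\"older step with the three exponents $p$, $s$ and the remainder, estimating $\|\Omega(x-\cdot)\|_{L_s(B(x_0,t))}\lesssim t^{\frac ns}$ and $|B(x_0,t)|^{1-\frac1p-\frac1s}\approx t^{n(1-\frac1p-\frac1s)}$, which is the same computation as your two-exponent H\"older plus the embedding $L_s(S^{n-1})\hookrightarrow L_{p'}(S^{n-1})$.

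The genuine gap is the case $q<s$: there you only announce a plan (Minkowski's inequality, then a ``delicate'' spherical estimate to be summed over dyadic shells) and never carry it out. This case cannot be skipped: it is also the only route to the weak-type statement, because for $p=1$ the condition $s'\le p$ forces $s=\infty$, so your final paragraph silently relies on the unproven step whenever $s<\infty$. In the paper this case is finished in a few lines, with no dyadic decomposition: after Minkowski's integral inequality,
\[
\|I_{\Omega,\alpha}f_2\|_{L_q(B)}\le\int_{2r}^{\infty}\int_{B(x_0,t)}|f(y)|\,\|\Omega(\cdot-y)\|_{L_q(B)}\,dy\,\frac{dt}{t^{n+1-\alpha}},
\]
one applies H\"older on $B$ (using $q<s$) to get $\|\Omega(\cdot-y)\|_{L_q(B)}\le|B|^{\frac1q-\frac1s}\|\Omega(\cdot-y)\|_{L_s(B)}$, invokes the uniform bound $\|\Omega(\cdot-y)\|_{L_s(B)}\lesssim r^{\frac ns}$, integrates in $y$ to produce $\|f\|_{L_1(B(x_0,t))}$, and only at the very end converts $\|f\|_{L_1(B(x_0,t))}\lesssim t^{\frac n{p'}}\|f\|_{L_p(B(x_0,t))}$, which turns the weight $t^{\alpha-n-1}$ into $t^{-\frac nq-1}$; so there is no need to ``keep $f$ in $L_p$ on each shell.'' That said, your instinct about where the difficulty sits is accurate: the crux is exactly the uniform estimate $\|\Omega(\cdot-y)\|_{L_s(B)}\lesssim r^{\frac ns}$ for $y$ far from $B$, which the paper asserts without justification. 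It is immediate when $\Omega\in L_\infty(S^{n-1})$, but for general $\Omega\in L_s(S^{n-1})$ the set $B-y$ lies in a spherical sector of angular width about $r/|x_0-y|$, and polar coordinates only yield $\int_B|\Omega(x-y)|^s\,dx\lesssim r\,|x_0-y|^{n-1}\int_{\mathrm{cap}}|\Omega|^s\,d\sigma$, which for cap-concentrated $\Omega$ is much larger than $r^n$. So completing your plan rigorously would require either an additional hypothesis on $\Omega$ or a genuinely finer argument than the one the paper records; your proposal stops exactly where the real work lies.
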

\begin{proof}
Let $0<\a<n$, $1\le s'\le p<\frac{n}{\a}$ and $\frac{1}{q}=\frac{1}{p}-\frac{\a}{n}$. Set $B=B(x_0,r)$ for the ball centered at $x_0$ and of radius $r$.
We represent  $f$ as
\begin{equation}\label{repr}
f=f_1+f_2, \ \quad f_1(y)=f(y)\chi _{2B}(y),\quad
 f_2(y)=f(y)\chi_{\dual {(2B)}}(y), \ \quad r>0,
\end{equation}
and have
$$
\|I_{\Omega,\a} f\|_{L_{q}(B)} \le \|I_{\Omega,\a} f_1\|_{L_{q}(B)} +\|I_{\Omega,\a} f_2\|_{L_{q}(B)}.
$$

Since $f_1\in L_p(\Rn)$, $I_{\Omega,\a} f_1\in L_q(\Rn)$  and from the
boundedness of $I_{\Omega,\a}$ from  $L_p(\Rn)$ to $L_q(\Rn)$ it follows that:
\begin{equation*}
\|I_{\Omega,\a} f_1\|_{L_q(B)}\leq \|I_{\Omega,\a} f_1\|_{L_q(\Rn)}\leq
C\|f_1\|_{L_p(\Rn)}=C\|f\|_{L_p(2B)},
\end{equation*}
where constant $C >0$ is independent of $f$.

It's clear that $x\in B$, $y\in \dual {(2B)}$ implies
$\frac{1}{2}|x_0-y|\le|x-y|\le \frac{3}{2}|x_0-y|$. We get
$$
|I_{\Omega,\a} f_2(x)|\leq 2^{n-\a} c_1 \, \int_{\dual {(2B)}}\frac{|f(y)| |\Omega(x-y)|}{|x_0-y|^{n-\a}}dy.
$$
By Fubini's theorem we have
\begin{equation*}
\begin{split}
\int_{\dual {(2B)}}\frac{|f(y)| |\Omega(x-y)|}{|x_0-y|^{n-\a}}dy&\thickapprox
\int_{\dual {(2B)}}|f(y)| |\Omega(x-y)| \int_{|x_0-y|}^{\i}\frac{dt}{t^{n+1-\a}}dy
\\
&\thickapprox \int_{2r}^{\i}\int_{2r\leq |x_0-y|\le t} |f(y)| |\Omega(x-y)| dy \frac{dt}{t^{n+1-\a}}
\\
&\lesssim \int_{2r}^{\i}\int_{B(x_0,t) }|f(y)| |\Omega(x-y)| dy\frac{dt}{t^{n+1-\a}}.
\end{split}
\end{equation*}
Applying H\"older's inequality, we get
\begin{equation} \label{sal00Pot}
\begin{split}
&\int_{\dual {(2B)}}\frac{|f(y)| |\Omega(x-y)|}{|x_0-y|^{n-\a}}dy
\\
& \lesssim
\int_{2r}^{\i} \|f\|_{L_p(B(x_0,t))} \, \|\Omega(\cdot-y)\|_{L_s(B(x_0,r))}
\, |B(x_0,t)|^{1-\frac{1}{p}-\frac{1}{s}}  \, \frac{dt}{t^{n+1-\a}}
\\
& \lesssim \int_{2r}^{\i} \|f\|_{L_p(B(x_0,t))} \, \frac{dt}{t^{\frac{n}{q}+1}}.
\end{split}
\end{equation}

Moreover, for all $p\in [1,\i)$ the inequality
\begin{equation} \label{ves2Pot}
\|I_{\Omega,\a} f_2\|_{L_q(B)}\lesssim
r^{\frac{n}{q}}\int_{2r}^{\i}\|f\|_{L_p(B(x_0,t))}\frac{dt}{t^{\frac{n}{q}+1}}.
\end{equation}
is valid. Thus
\begin{equation*}
\|I_{\Omega,\a} f\|_{L_q(B)}\lesssim \|f\|_{L_p(2B)}+
r^{\frac{n}{q}}\int_{2r}^{\i}\|f\|_{L_p(B(x_0,t))}\frac{dt}{t^{\frac{n}{q}+1}}.
\end{equation*}

On the other hand,
\begin{align}  \label{sal01Pot}
\|f\|_{L_p(2B)} & \thickapprox \; r^{\frac{n}{q}} \|f\|_{L_p(2B)} \int_{2r}^{\i}\frac{dt}{t^{\frac{n}{q}+1}} \notag
\\
& \le  \;  r^{\frac{n}{q}} \int_{2r}^{\i}\|f\|_{L_p(B(x_0,t))}\frac{dt}{t^{\frac{n}{q}+1}}.
\end{align}
Thus
\begin{equation*}
\|I_{\Omega,\a} f\|_{L_q(B)} \lesssim \;  r^{\frac{n}{q}} \int_{2r}^{\i}\|f\|_{L_p(B(x_0,t))}\frac{dt}{t^{\frac{n}{q}+1}}.
\end{equation*}

When $1 < q < s$, by Fubini's theorem and the Minkowski inequality, we get
\begin{align} \label{ves2mPot}
\|I_{\Omega,\a}f_2\|_{L_q(B)} & \le
\Big( \int_{B} \Big| \int_{2r}^{\i} \int_{B(x_0,t)}|f(y)| |\Omega(x-y)| dy \frac{dt}{t^{n+1-\a}}\Big|^q \Big)^{\frac{1}{q}} \notag
\\
& \le \int_{2r}^{\i} \int_{B(x_0,t)}|f(y)| \, \|\Omega(\cdot-y)\|_{L_q(B)} dy \frac{dt}{t^{n+1-\a}}\notag
\\
& \le \, r^{\frac{n}{q}-\frac{n}{s}} \int_{2r}^{\i} \int_{B(x_0,t)}|f(y)| \, \|\Omega(\cdot-y)\|_{L_s(B)} dy \, \frac{dt}{t^{n+1-\a}} \notag
\\
& \lesssim r^{\frac{n}{q}} \int_{2r}^{\i}\|f\|_{L_1(B(x_0,t))} \frac{dt}{t^{n+1-\a}}
\\
& \lesssim r^{\frac{n}{q}} \int_{2r}^{\i}\|f\|_{L_p(B(x_0,t))} \frac{dt}{t^{\frac{n}{q}+1}}. \notag
\end{align}

Let $p=1<q<s \le \i$. From the weak $(1,q)$ boundedness of $I_{\Omega,\a}$ and \eqref{sal01Pot} it follows
that:
\begin{align} \label{gfr9Pot}
\|I_{\Omega,\a} f_1\|_{WL_q(B)} & \leq \|I_{\Omega,\a} f_1\|_{WL_q(\Rn)}\lesssim \|f_1\|_{L_1(\Rn)} \notag
\\
& = \|f\|_{L_1(2B)} \lesssim r^{\frac{n}{q}} \int_{2r}^{\i}\|f\|_{L_1(B(x_0,t))}\frac{dt}{t^{\frac{n}{q}+1}}.
\end{align}

Then from \eqref{ves2Pot} and \eqref{gfr9Pot} we get the inequality \eqref{eq3.5.WXM0}.
\end{proof}

\begin{thm}\label{3.4.Pot}
Suppose that $x_0 \in \Rn$, $\Omega \in L_s(S^{n-1})$, $ 1<s\leq\infty$, be a homogeneous of degree zero. Let $0 < \a < n$, $1 \le p < \frac{n}{\a}$,
$\frac{1}{q}=\frac{1}{p}-\frac{\a}{n}$, and $s' \le p$ or $q< s$. Let also, the pair $(\varphi_1,\varphi_2)$
satisfy the condition
\begin{equation}\label{eq3.6.VZPot}
\int_{r}^{\infty} \frac{\es_{t<\tau<\infty} \varphi_1(x_0,\tau) \tau^{\frac{n}{p}}}{t^{\frac{n}{q}+1}}dt \le
 C \,\varphi_2(x_0,r),
\end{equation}
where $C$ does not depend on $r$. Then the operators $M_{\Omega,\a}$ and $I_{\Omega,\a}$ are bounded from $LM_{p,\varphi_1}^{\{x_0\}}$ to $LM_{q,\varphi_2}^{\{x_0\}}$
for $p>1$ and from $LM_{1,\varphi_1}^{\{x_0\}}$ to $WLM_{q,\varphi_2}^{\{x_0\}}$ for $p=1$. Moreover, for $p>1$
\begin{equation*}
\|M_{\Omega,\a} f\|_{LM_{q,\varphi_2}^{\{x_0\}}} \lesssim
\|I_{\Omega,\a} f\|_{LM_{q,\varphi_2}^{\{x_0\}}} \lesssim \|f\|_{LM_{p,\varphi_1}^{\{x_0\}}}
\end{equation*}
and for $p=1$
\begin{equation*}
\|M_{\Omega,\a} f\|_{WLM_{q,\varphi_2}^{\{x_0\}}} \lesssim \|I_{\Omega,\a} f\|_{WLM_{q,\varphi_2}^{\{x_0\}}} \lesssim \|f\|_{LM_{1,\varphi_1}^{\{x_0\}}}.
\end{equation*}
\end{thm}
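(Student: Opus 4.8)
The plan is to reduce both operators to the local estimates already established and then to feed these into the Hardy-type inequality of Theorem \ref{thm3.2.}. First I would dispose of the maximal operator by a pointwise bound. For $y\in B(x,t)$ one has $|x-y|<t$, so $|B(x,t)|^{-1+\frac{\a}{n}}\approx t^{\a-n}\le |x-y|^{\a-n}$, whence $M_{\Omega,\a}f(x)\lesssim I_{\Omega,\a}(|f|)(x)$. This domination immediately gives the left-hand inequalities $\|M_{\Omega,\a}f\|_{LM_{q,\varphi_2}^{\{x_0\}}}\lesssim\|I_{\Omega,\a}f\|_{LM_{q,\varphi_2}^{\{x_0\}}}$ and its weak analogue, so it suffices to treat $I_{\Omega,\a}$.

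For the case $p>1$ I would start from the local estimate of Lemma \ref{lem3.3.Pot},
$$
\|I_{\Omega,\a}f\|_{L_q(B(x_0,r))}\lesssim r^{\frac{n}{q}}\int_{2r}^{\i}t^{-\frac{n}{q}-1}\|f\|_{L_p(B(x_0,t))}\,dt,
$$
multiply through by $\varphi_2(x_0,r)^{-1}|B(x_0,r)|^{-\frac1q}\approx\varphi_2(x_0,r)^{-1}r^{-\frac nq}$, and take the supremum over $r>0$. Since $r^{\frac nq}r^{-\frac nq}\approx 1$ and $\int_{2r}^{\i}\le\int_{r}^{\i}$ (the integrand being non-negative), this bounds $\|I_{\Omega,\a}f\|_{LM_{q,\varphi_2}^{\{x_0\}}}$ by
$$
\sup_{r>0}\varphi_2(x_0,r)^{-1}\int_{r}^{\i}t^{-\frac nq-1}g(t)\,dt,\qquad g(t):=\|f\|_{L_p(B(x_0,t))}.
$$
The function $g$ is non-negative and non-decreasing, so this is exactly the left-hand side of \eqref{vav01} with $v_2(r)=\varphi_2(x_0,r)^{-1}$, $w(s)=s^{-\frac nq-1}$ and $v_1(r)=\varphi_1(x_0,r)^{-1}r^{-\frac np}$; with this $v_1$ the right-hand side of \eqref{vav01} is $\approx\|f\|_{LM_{p,\varphi_1}^{\{x_0\}}}$.

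It then remains to check that the governing quantity $B$ of \eqref{vav02} is precisely the hypothesis \eqref{eq3.6.VZPot}. The key identity is the reciprocal duality $\big(\ess_{s<\tau<\i}v_1(\tau)\big)^{-1}=\es_{s<\tau<\i}\varphi_1(x_0,\tau)\tau^{\frac np}$, which turns
$$
B=\ess_{r>0}\varphi_2(x_0,r)^{-1}\int_{r}^{\i}\frac{s^{-\frac nq-1}\,ds}{\ess_{s<\tau<\i}v_1(\tau)}=\ess_{r>0}\varphi_2(x_0,r)^{-1}\int_{r}^{\i}s^{-\frac nq-1}\,\es_{s<\tau<\i}\varphi_1(x_0,\tau)\tau^{\frac np}\,ds
$$
into the left-hand side of \eqref{eq3.6.VZPot}. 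Hence $B<\i$, and Theorem \ref{thm3.2.} yields $\|I_{\Omega,\a}f\|_{LM_{q,\varphi_2}^{\{x_0\}}}\lesssim\|f\|_{LM_{p,\varphi_1}^{\{x_0\}}}$. The case $p=1$ runs identically, starting instead from the weak local estimate \eqref{eq3.5.WXM0} and replacing the $L_q$-norm on the left by the $WL_q$-norm, so that the output lands in $WLM_{q,\varphi_2}^{\{x_0\}}$; here $v_1(r)=\varphi_1(x_0,r)^{-1}r^{-n}$.

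I expect the main difficulty to be bookkeeping rather than conceptual: aligning the three weights $v_1,v_2,w$ and the monotone profile $g$ so that Theorem \ref{thm3.2.} applies verbatim, and in particular executing the $\ess$/$\es$ reciprocal identity cleanly so that \eqref{vav02} reproduces \eqref{eq3.6.VZPot}. The only genuinely delicate points are verifying the pointwise domination constant for $M_{\Omega,\a}$ and the harmless passage from $\int_{2r}^{\i}$ to $\int_{r}^{\i}$; everything else is a direct substitution into the already-proved Lemma \ref{lem3.3.Pot} and Theorem \ref{thm3.2.}.
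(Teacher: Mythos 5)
Your proof is correct and follows essentially the same route as the paper: it combines the local estimate of Lemma \ref{lem3.3.Pot} (and its weak analogue \eqref{eq3.5.WXM0} for $p=1$) with the weighted Hardy inequality of Theorem \ref{thm3.2.}, using the same weight choices $v_2(r)=\varphi_2(x_0,r)^{-1}$, $v_1(r)=\varphi_1(x_0,r)^{-1}r^{-\frac{n}{p}}$ and $w(s)=s^{-\frac{n}{q}-1}$. The details you make explicit --- the pointwise domination of $M_{\Omega,\a}$ by the rough fractional integral, the monotonicity of $g(t)=\|f\|_{L_p(B(x_0,t))}$, and the reciprocal identity converting \eqref{vav02} into the hypothesis \eqref{eq3.6.VZPot} --- are exactly what the paper's terse proof leaves implicit.
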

\begin{proof}
By Lemma \ref{lem3.3.Pot} and Theorem \ref{thm3.2.} with $v_2(r)=\varphi_2(x_0,r)^{-1}$,
$v_1(r)=\varphi_1(x_0,r)^{-1} r^{-\frac{n}{p}}$ and $w(r)=r^{-\frac{n}{q}}$ we have for $p>1$
\begin{equation*}
\begin{split}
\|I_{\Omega,\a} f\|_{LM_{q,\varphi_2}^{\{x_0\}}} & \lesssim \sup_{r>0}\varphi_2(x_0,r)^{-1}
\int_r^{\i}\|f\|_{L_p(B(x_0,t))}\frac{dt}{t^{\frac{n}{q}+1}}
\\
& \lesssim \sup_{r>0}\varphi_1(x_0,r)^{-1} \, r^{-\frac{n}{p}} \,\|f\|_{L_p(B(x_0,r))} = \|f\|_{LM_{p,\varphi_1}^{\{x_0\}}}
\end{split}
\end{equation*}
and for $p=1$
\begin{equation*}
\begin{split}
\|I_{\Omega,\a} f\|_{WLM_{q,\varphi_2}^{\{x_0\}}} & \lesssim \sup_{r>0}\varphi_2(x_0,r)^{-1}
\int_r^{\i}\|f\|_{L_1(B(x_0,t))}\frac{dt}{t^{\frac{n}{q}+1}}
\\
& \lesssim \sup_{r>0}\varphi_1(x_0,r)^{-1} \, r^{-n} \, \|f\|_{L_p(B(x_0,r))} = \|f\|_{LM_{1,\varphi_1}^{\{x_0\}}}.
\end{split}
\end{equation*}

\end{proof}
\begin{cor}\label{3.4.PotG}
Suppose that $\Omega \in L_s(S^{n-1})$, $ 1<s\leq\infty$, be a homogeneous of degree zero. Let $0 < \a < n$, $1 \le p < \frac{n}{\a}$,
$\frac{1}{q}=\frac{1}{p}-\frac{\a}{n}$, and $s' \le p$ or $q< s$. Let also, the pair $(\varphi_1,\varphi_2)$ satisfy the condition
\begin{equation*}\label{eq3.6.VZPotG}
\int_{r}^{\infty} \frac{\es_{t<\tau<\infty} \varphi_1(x,\tau) \tau^{\frac{n}{p}}}{t^{\frac{n}{q}+1}}dt \le C \,\varphi_2(x,r),
\end{equation*}
where $C$ does not depend on $x$ and $r$. Then the operators $M_{\Omega,\a}$ and $I_{\Omega,\a}$ are bounded from $M_{p,\varphi_1}$ to $M_{q,\varphi_2}$
for $p>1$ and from $M_{1,\varphi_1}$ to $WM_{q,\varphi_2}$ for $p=1$. Moreover, for $p>1$
\begin{equation*}
\|M_{\Omega,\a} f\|_{M_{q,\varphi_2}} \lesssim \|I_{\Omega,\a} f\|_{M_{q,\varphi_2}} \lesssim \|f\|_{M_{p,\varphi_1}},
\end{equation*}
and for $p=1$
\begin{equation*}
\|M_{\Omega,\a} f\|_{WM_{q,\varphi_2}} \lesssim \|I_{\Omega,\a} f\|_{WM_{q,\varphi_2}} \lesssim \|f\|_{M_{1,\varphi_1}}.
\end{equation*}
\end{cor}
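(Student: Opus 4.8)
The plan is to bootstrap the corollary from Theorem \ref{3.4.Pot} by recognizing the generalized Morrey norm as the supremum of the generalized \emph{local} Morrey norms taken over all admissible centers. Indeed, unwinding the definitions one has, for every $f$,
$$\|f\|_{M_{p,\varphi}} = \sup_{x_0\in\Rn}\|f\|_{LM_{p,\varphi}^{\{x_0\}}},$$
since the right-hand side expands to $\sup_{x_0\in\Rn}\sup_{r>0}\varphi(x_0,r)^{-1}|B(x_0,r)|^{-1/p}\|f\|_{L_p(B(x_0,r))}$, which is exactly $\|f\|_{M_{p,\varphi}}$; the identical relation holds with the weak norms replacing the strong ones.

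First I would fix an arbitrary $x_0\in\Rn$ and note that the hypothesis of the corollary, evaluated at $x=x_0$, is precisely condition \eqref{eq3.6.VZPot} of Theorem \ref{3.4.Pot}. Because the corollary postulates a constant $C$ independent of $x$, condition \eqref{eq3.6.VZPot} holds for every center $x_0$ with one and the same $C$. Applying Theorem \ref{3.4.Pot} at this $x_0$ therefore gives, for $p>1$,
$$\|I_{\Omega,\a}f\|_{LM_{q,\varphi_2}^{\{x_0\}}}\lesssim\|f\|_{LM_{p,\varphi_1}^{\{x_0\}}}\le\|f\|_{M_{p,\varphi_1}},$$
and likewise for $M_{\Omega,\a}$, with an implied constant that does \emph{not} depend on $x_0$.

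Taking the supremum over $x_0\in\Rn$ on the left and invoking the identity above turns the left-hand side into $\|I_{\Omega,\a}f\|_{M_{q,\varphi_2}}$, which yields the asserted bound; the case $p=1$ is word-for-word the same with $LM_{q,\varphi_2}^{\{x_0\}}$ and $M_{q,\varphi_2}$ replaced by $WLM_{q,\varphi_2}^{\{x_0\}}$ and $WM_{q,\varphi_2}$. The one point I would take care to justify — and which is really the crux — is the $x_0$-uniformity of the constant furnished by Theorem \ref{3.4.Pot}. This uniformity is genuine: that constant is assembled from the local estimate of Lemma \ref{lem3.3.Pot}, whose constant depends only on $n,\a,p,q,s,\Omega$ (it is inherited from the translation-invariant $L_p(\Rn)\to L_q(\Rn)$ boundedness of $I_{\Omega,\a}$ together with applications of H\"older's and Minkowski's inequalities, none of which sees the center), and from the Hardy-operator bound $B$ of Theorem \ref{thm3.2.}, which coincides with the left-hand side of \eqref{eq3.6.VZPot} and is hence $\le C$ uniformly in $x_0$ by hypothesis. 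Once uniformity is in hand, the supremum argument closes the proof, and the chain $\|M_{\Omega,\a}f\|\lesssim\|I_{\Omega,\a}f\|\lesssim\|f\|$ is preserved centerwise and therefore globally.
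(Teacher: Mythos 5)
Your proof is correct and coincides with the paper's (implicit) argument: the paper states this corollary without a separate proof, as an immediate consequence of Theorem \ref{3.4.Pot}, precisely because the hypothesis holds with a constant independent of the center $x$ and the generalized Morrey norm is the supremum over all centers $x_0$ of the local norms $\|\cdot\|_{LM_{p,\varphi}^{\{x_0\}}}$. The $x_0$-uniformity of the constant, which you single out and justify via Lemma \ref{lem3.3.Pot} and the identity $C^{\ast}=B$ in Theorem \ref{thm3.2.}, is indeed the only point that needs checking.
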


\begin{cor} \label{gar1Pot}
Let $1 \le p < \infty$, $0<\a<\frac{n}{p}$, $\frac{1}{q}=\frac{1}{p}-\frac{\a}{n}$
 and $(\varphi_1,\varphi_2)$ satisfy condition \eqref{eq3.6.VZPot}.
Then the operators $M_{\a}$  and $I_{\a}$ are bounded from $LM_{p,\varphi_1}^{\{x_0\}}$ to
$LM_{q,\varphi_2}^{\{x_0\}}$ for $p>1$ and from $M_{1,\varphi_1}^{\{x_0\}}$ to $WLM_{q,\varphi_2}^{\{x_0\}}$ for $p=1$.
\end{cor}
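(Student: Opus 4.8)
The plan is to recognize Corollary \ref{gar1Pot} as the special case $\Omega \equiv 1$ of Theorem \ref{3.4.Pot}. As already observed in the Introduction, when $\Omega \equiv 1$ the rough operators reduce to the classical ones, $I_{\Omega,\a} = I_\a$ and $M_{\Omega,\a} = M_\a$. Since the constant function $\Omega \equiv 1$ belongs to $L_s(S^{n-1})$ for every $1 < s \le \i$, and in particular to $L_\i(S^{n-1})$, I would fix the choice $s = \i$, so that $s' = 1$.

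First I would check that all hypotheses of Theorem \ref{3.4.Pot} are satisfied. With $s = \i$ the dichotomy ``$s' \le p$ or $q < s$'' degenerates to the single condition $s' = 1 \le p$, which holds for every $p \ge 1$ by the standing assumption $1 \le p < \i$; thus the $\Omega$-regularity requirement is automatic here. The admissible range of $\a$ in the corollary, namely $0 < \a < \frac{n}{p}$, is exactly the theorem's condition $1 \le p < \frac{n}{\a}$ (both are equivalent to $\a p < n$), the exponent relation $\frac{1}{q} = \frac{1}{p} - \frac{\a}{n}$ is the same, and the pairing hypothesis imposed on $(\varphi_1,\varphi_2)$ is literally condition \eqref{eq3.6.VZPot} in both statements. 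Hence the two sets of hypotheses coincide once $\Omega \equiv 1$ and $s = \i$ are inserted.

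Then I would simply invoke Theorem \ref{3.4.Pot} with the fixed center $x_0$, $\Omega \equiv 1$, and $s = \i$, and read off the asserted mapping properties: boundedness of $I_\a$ and $M_\a$ from $LM_{p,\varphi_1}^{\{x_0\}}$ to $LM_{q,\varphi_2}^{\{x_0\}}$ when $p > 1$, and from $LM_{1,\varphi_1}^{\{x_0\}}$ to $WLM_{q,\varphi_2}^{\{x_0\}}$ when $p = 1$. There is essentially no obstacle in this argument, as the corollary is a pure specialization; the only point that warrants explicit verification is the bookkeeping of the parameter ranges described above, to confirm that $0 < \a < \frac{n}{p}$ together with $p \ge 1$ places us squarely inside the region $0 < \a < n$, $1 \le p < \frac{n}{\a}$ covered by the theorem.
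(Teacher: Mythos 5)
Your proposal is correct and matches the paper's intent exactly: Corollary \ref{gar1Pot} is stated as an immediate specialization of Theorem \ref{3.4.Pot} with $\Omega \equiv 1$ (hence $s=\infty$, $s'=1\le p$, so the rough-kernel condition is automatic), which is precisely the route you take. Your additional bookkeeping that $0<\a<\frac{n}{p}$ is equivalent to $1\le p<\frac{n}{\a}$ and implies $0<\a<n$ is the only verification needed, and it is done correctly.
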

\begin{rem}
Note that, in the case $s=\i$ Corollary \ref{3.4.PotG} was proved in \cite{GULAKShIEOT2012}. The condition \eqref{eq3.6.VZPot} in Theorem \ref{3.4.Pot} is weaker
than condition \eqref{GulZPot} in Theorem \ref{nakaiVagifPot0} (see \cite{GULAKShIEOT2012}). 
\end{rem}

\

\section{ Commutators of fractional integral operator with rough kernels in the spaces $LM_{p,\varphi}^{\{x_0\}}$}

Let $T$ be a linear operator, for a function $b$, we define the commutator $[b, T]$ by
$$
[b, T] f(x) = b(x) \, Tf(x) - T(b f)(x)
$$
for any suitable function $f$. If $\widetilde{T}$ be a Calder\'{o}n-Zygmund singular integral operator, a well known result of Coifman, Rochberg and Weiss \cite{CRW} states that the commutator $[b, \widetilde{T}] f = b \, \widetilde{T}f-\widetilde{T}(b f)$ is bounded on $L_p(\Rn)$, $1 < p < \infty$, if and only if $b \in BMO(\Rn)$.
The commutator of Calder\'{o}n-Zygmund operators plays an important role in studying the regularity of solutions of elliptic partial differential equations of second order
(see, for example, \cite{ChFraL1, ChFraL2, FazRag2}). In \cite{Chanillo}, Chanillo proved that the commutator $[b,I_{\a}] f = b \, I_{\a}f-I_{\a}(b f)$
is bounded from $L_p(\Rn)$ to $L_q(\Rn)$, $(1 < p < q < \infty$, $\frac{1}{q}=\frac{1}{p}-\frac{\a}{n})$
if and only if $b \in BMO(\Rn)$.

%
%
%
%
%

The definition of local Campanato space as follows.

\begin{defn}
Let $1\le q<\i$ and $0\le\lambda<\frac{1}{n}$. A function $f\in L_q^{\rm loc}(\Rn)$ is said to belong to the $CBMO_{q,\lambda}^{\{x_0\}}(\Rn)$ (central Campanato space), if
\begin{equation*}
\|f\|_{CBMO_{q,\lambda}^{\{x_0\}}}=\sup_{r>0} \Big(\frac{1}{|B(x_0,r)|^{1+\lambda q}} \int_{B(x_0,r)}|f(y)-f_{B(x_0,r)}|^q dy \Big)^{1/q}<\infty,
\end{equation*}
where
$$
f_{B(x_0,r)}=\frac{1}{|B(x_0,r)|} \int_{B(x_0,r)} f(y)dy.
$$
Define
$$
CBMO_{q,\lambda}^{\{x_0\}}(\Rn)=\{f \in L_q^{\rm loc}(\Rn) ~ : ~ \| f \|_{CBMO_{q,\lambda}^{\{x_0\}}} < \i  \}.
$$
\end{defn}
In \cite{LuYang1}, Lu and Yang introduced the central BMO space $CBMO_{q}(\Rn)=CBMO_{q,0}^{\{0\}}(\Rn)$.
Note that, $BMO(\Rn) \subset CBMO_{q}^{\{x_0\}}(\Rn)$, $1\le q<\i$.
The space $CBMO_q^{\{x_0\}}(\Rn)$ can be regarded as a local version of $BMO(\Rn)$, the space of bounded
mean oscillation, at the origin. But, they have quite different properties. The classical
John-Nirenberg inequality shows that functions in $BMO(\Rn)$ are locally exponentially integrable.
This implies that, for any $1\le q<\i$, the functions in $BMO(\Rn)$ can be described by means of the condition:
\begin{equation*}
\sup_{r>0} \Big( \frac{1}{|B|} \int_{B}|f(y)-f_{B}|^qdy \Big)^{1/q}<\infty,
\end{equation*}
where $B$ denotes an arbitrary ball in $\Rn$. However, the space $CBMO_q^{\{x_0\}}(\Rn)$ depends on $q$.
If $q_1 < q_2$, then $CBMO_{q_2}^{\{x_0\}}(\Rn) \subsetneqq CBMO_{q_1}^{\{x_0\}}(\Rn)$.
Therefore, there is no analogy of the famous John-Nirenberg inequality of $BMO(\Rn)$ for the space $CBMO_q^{\{x_0\}}(\Rn)$.
One can imagine that the behavior of $CBMO_q^{\{x_0\}}(\Rn)$ may be quite different from that of $BMO(\Rn)$.

\begin{lem} 
Let $b$ be a function in $CBMO_{q,\lambda}^{\{x_0\}}(\Rn)$, $1 \le q < \infty$, $0\le\lambda<\frac{1}{n}$ and $r_1, r_2 > 0$. Then
$$
\left( \frac{1}{|B(x_0,r_1)|^{1+\lambda q}} \int_{B(x_0,r_1)} |b(y)-b_{B(x_0,r_2)}|^q dy\right)^{\frac{1}{q}}
\le C \left( 1+ \Big|\ln\frac{r_1}{r_2} \Big| \right) \| b \|_{{CBMO_{q,\lambda}^{\{x_0\}}}},
$$
where $C>0$ is independent of $b$, $r_1$ and $r_2$.
\end{lem}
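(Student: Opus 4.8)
The plan is to reduce the whole estimate to a telescoping comparison of the two averages $b_{B(x_0,r_1)}$ and $b_{B(x_0,r_2)}$. Write $B_1=B(x_0,r_1)$ and $B_2=B(x_0,r_2)$, and insert $b_{B_1}$ via the triangle inequality $|b(y)-b_{B_2}|\le|b(y)-b_{B_1}|+|b_{B_1}-b_{B_2}|$ inside the weighted $L_q(B_1)$-norm. This splits the left-hand side as
\[
\Big(\frac{1}{|B_1|^{1+\lambda q}}\int_{B_1}|b(y)-b_{B_1}|^q\,dy\Big)^{1/q}+|B_1|^{-\lambda}\,|b_{B_1}-b_{B_2}|,
\]
since the second integrand is constant. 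The first summand is bounded by $\|b\|_{CBMO_{q,\lambda}^{\{x_0\}}}$ directly from the definition, so everything reduces to estimating $|B_1|^{-\lambda}|b_{B_1}-b_{B_2}|$, the normalized difference of averages.

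For $|b_{B_1}-b_{B_2}|$ I would join the two balls by a chain of concentric doubling balls. Assuming first $r_1\le r_2$, set $B^{(k)}=B(x_0,2^k r_1)$ for $k=0,1,\dots,N$, where $N$ is the least integer with $r_2\le 2^N r_1$; then $N+1\lesssim 1+|\ln(r_1/r_2)|$ and $B^{(N-1)}\subset B_2\subset B^{(N)}$. For consecutive balls, using $|B^{(k+1)}|=2^n|B^{(k)}|$,
\[
|b_{B^{(k)}}-b_{B^{(k+1)}}|\le\frac{1}{|B^{(k)}|}\int_{B^{(k+1)}}|b(y)-b_{B^{(k+1)}}|\,dy,
\]
and Hölder's inequality together with the defining inequality of $CBMO_{q,\lambda}^{\{x_0\}}$ bounds the right-hand side by $C\,2^{n(1+\lambda)}\,|B^{(k)}|^{\lambda}\,\|b\|_{CBMO_{q,\lambda}^{\{x_0\}}}$. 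The boundary step from $B^{(N)}$ (or $B^{(N-1)}$) to $B_2$ is handled the same way, since $B_2$ is comparable in size to both. Summing over the chain gives $|b_{B_1}-b_{B_2}|\lesssim\|b\|_{CBMO_{q,\lambda}^{\{x_0\}}}\sum_{k=0}^{N}|B^{(k)}|^{\lambda}$, and the case $r_1>r_2$ is symmetric with the roles of the radii interchanged.

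The main obstacle is the final summation: after dividing by $|B_1|^{\lambda}$ one is left with $\sum_{k=0}^{N}2^{kn\lambda}$, and the task is to pass from this to the factor $1+|\ln(r_1/r_2)|$. When $\lambda=0$ every term equals $1$, the sum is exactly $N+1\approx 1+|\ln(r_1/r_2)|$, and the lemma follows at once; this is the genuinely central case $CBMO_{q}^{\{x_0\}}$, where the logarithm is produced cleanly by the number of doubling steps. For $0<\lambda<\tfrac1n$ the summands grow geometrically and the sum is governed by its last term, so the naive chain yields a power of $r_2/r_1$ rather than a logarithm. Extracting the asserted \emph{logarithmic} dependence is therefore the delicate point, and I would expect this step to be where the real work lies — it forces one to keep the normalization $|B_1|^{-\lambda}$ carefully paired against the size of the largest ball in the chain. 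Once that summation is controlled, combining it with the first summand yields the stated bound.
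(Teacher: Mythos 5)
Your decomposition and chain are correct as far as they go: the Minkowski splitting into
$$
\Big(\frac{1}{|B_1|^{1+\lambda q}}\int_{B_1}|b(y)-b_{B_1}|^q\,dy\Big)^{1/q}+|B_1|^{-\lambda}\,|b_{B_1}-b_{B_2}|,
$$
the one-step doubling estimate $|b_{B^{(k)}}-b_{B^{(k+1)}}|\lesssim 2^{n(1+\lambda)}|B^{(k)}|^{\lambda}\|b\|_{CBMO_{q,\lambda}^{\{x_0\}}}$, and the count $N+1\lesssim 1+|\ln(r_1/r_2)|$ are all right; for $\lambda=0$ this is a complete and correct proof. Note that the paper offers no proof of this lemma at all (it is quoted as known), and the classical chaining argument for central BMO that it implicitly invokes is precisely your $\lambda=0$ computation.

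The step you defer to ``where the real work lies'' is, however, not a gap that can be closed: the inequality as stated is false for $0<\lambda<\frac{1}{n}$, and the geometric sum $\sum_{k=0}^{N}2^{kn\lambda}$ your chain produces is sharp. Take $x_0=0$ and $b(y)=|y|^{n\lambda}$. Then $b_{B(0,r)}=\frac{1}{1+\lambda}r^{n\lambda}$, and by scaling the quantity $|B(0,r)|^{-(1+\lambda q)}\int_{B(0,r)}\big|\,|y|^{n\lambda}-b_{B(0,r)}\big|^q\,dy$ is independent of $r$, so $b\in CBMO_{q,\lambda}^{\{0\}}(\Rn)$. Now let $r_1\ll r_2$: for every $y\in B(0,r_1)$ one has $|y|^{n\lambda}\le r_1^{n\lambda}\le\frac{1}{2}\,b_{B(0,r_2)}$ once $(r_2/r_1)^{n\lambda}\ge 2(1+\lambda)$, hence $|b(y)-b_{B(0,r_2)}|\gtrsim r_2^{n\lambda}$ on all of $B(0,r_1)$, and the left-hand side of the lemma is $\gtrsim |B(0,r_1)|^{-\lambda}r_2^{n\lambda}\approx (r_2/r_1)^{n\lambda}$, which no bound of the form $C\big(1+|\ln(r_1/r_2)|\big)$ can dominate. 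So the statement is true only for $\lambda=0$, or for $r_2\le r_1$ (where your chain run from $r_2$ up to $r_1$ gives a sum $\lesssim\big(1+\ln(r_1/r_2)\big)|B_1|^{\lambda}$, which the normalization absorbs), or with the right-hand side corrected by the factor $\big(\max(r_1,r_2)/r_1\big)^{n\lambda}$. It is worth observing that what the paper actually uses later, in the proof of Lemma \ref{lem5.1.Pot}, is the estimate $|b_{B(x_0,r)}-b_{B(x_0,t)}|\lesssim\big(1+\ln\frac{t}{r}\big)\,t^{n\lambda}\,\|b\|_{CBMO_{p_2,\lambda}^{\{x_0\}}}$ for $t\ge 2r$ --- the difference of averages normalized by the \emph{larger} radius --- and that true estimate does follow from your chain; so your argument proves what the paper needs, just not the lemma as written.
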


In \cite{DingYZ} the following statement was proved for the commutators of fractional integral operators with
rough kernels, containing the result in \cite{Miz, Nakai}.
\begin{thm}  \label{nakaiFFPot}
Suppose that $x_0 \in \Rn$, $\Omega \in L_s(S^{n-1})$, $ 1<s\leq\infty$, be a homogeneous of degree zero and $b \in BMO(\Rn)$.
Let $0<\a<n$, $1 \le s' < p < \frac{n}{p}$, $\frac{1}{q}=\frac{1}{p}-\frac{\a}{n}$, $\varphi(x,r)$ which satisfies the conditions \eqref{nakcond} and \eqref{MizNPot}.
 Then the operator $[b,I_{\Omega,\a}]$ is bounded from $M_{p,\varphi}$ to $M_{q,\varphi}$.
\end{thm}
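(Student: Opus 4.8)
The plan is to transplant the method of Lemma~\ref{lem3.3.Pot} to the commutator, the only new ingredient being the logarithmic factor that the $BMO$ symbol generates. Since $\|[b,I_{\Omega,\a}]f\|_{M_{q,\varphi}}=\sup_{x_0,r}\varphi(x_0,r)^{-1}|B(x_0,r)|^{-\frac1q}\|[b,I_{\Omega,\a}]f\|_{L_q(B(x_0,r))}$, the whole theorem reduces to proving the local estimate
\begin{equation*}
\|[b,I_{\Omega,\a}]f\|_{L_q(B(x_0,r))}\lesssim\|b\|_{BMO}\,r^{\frac{n}{q}}\int_{2r}^{\i}\Big(1+\ln\frac{t}{r}\Big)\,\|f\|_{L_p(B(x_0,t))}\,\frac{dt}{t^{\frac{n}{q}+1}},
\end{equation*}
uniformly in the center $x_0$, and then running the argument of Theorem~\ref{nakaiPot} on the resulting weighted Hardy-type integral.

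First I would fix $B=B(x_0,r)$, write $b_B=b_{B(x_0,r)}$, decompose $f=f_1+f_2$ with $f_1=f\chi_{2B}$ and $f_2=f\chi_{\dual{(2B)}}$ as in \eqref{repr}, and use the identity $[b,I_{\Omega,\a}]f=(b-b_B)\,I_{\Omega,\a}f-I_{\Omega,\a}\big((b-b_B)f\big)$. The near part is immediate: by Theorem~B the rough commutator is bounded $L_p(\Rn)\to L_q(\Rn)$ for $b\in BMO$, so $\|[b,I_{\Omega,\a}]f_1\|_{L_q(B)}\le\|[b,I_{\Omega,\a}]f_1\|_{L_q(\Rn)}\lesssim\|b\|_{BMO}\|f\|_{L_p(2B)}$, and $\|f\|_{L_p(2B)}$ is absorbed into the right-hand side exactly as in \eqref{sal01Pot}.

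The core is the far part, split into the two terms above. For $(b-b_B)I_{\Omega,\a}f_2$ I would use that $\sup_{x\in B}|I_{\Omega,\a}f_2(x)|\lesssim\int_{2r}^{\i}\|f\|_{L_p(B(x_0,t))}\,t^{-\frac nq-1}\,dt$ is already produced in the proof of Lemma~\ref{lem3.3.Pot}, and pair it with $\|b-b_B\|_{L_q(B)}\lesssim\|b\|_{BMO}|B|^{1/q}$. For $I_{\Omega,\a}\big((b-b_B)f_2\big)$ I would repeat the Fubini step of Lemma~\ref{lem3.3.Pot} with $(b-b_B)f$ in place of $f$: when $s'<p$, the generalized H\"older inequality with exponents $\frac1u+\frac1p+\frac1s=1$ (admissible precisely because $s'<p$ forces $\frac1p+\frac1s<1$) separates $|b-b_B|$, $|f|$ and $|\Omega(x-\cdot)|$, while in the alternative range $q<s$ I would instead use Minkowski's inequality as in \eqref{ves2mPot} followed by ordinary H\"older between $|b-b_B|$ and $|f|$. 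In both cases the new factor is $\|b-b_{B}\|_{L_u(B(x_0,t))}\lesssim\|b\|_{BMO}\big(1+\ln\frac{t}{r}\big)|B(x_0,t)|^{1/u}$, the classical logarithmic growth estimate for $BMO$ across concentric balls (the $\lambda=0$ analogue of the local Campanato lemma stated above), and the powers of $t$ recombine exactly as in Lemma~\ref{lem3.3.Pot} to give $t^{-\frac nq-1}$; this yields the displayed local estimate.

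Finally I would insert this estimate into the scheme of Theorem~\ref{nakaiPot}: after bounding $\|f\|_{L_p(B(x_0,t))}$ by $\|f\|_{M_{p,\varphi}}\,\varphi(x_0,t)\,|B(x_0,t)|^{1/p}$, the matter comes down to a weighted integral inequality for $\varphi$ carrying the extra factor $1+\ln\frac tr$. Conditions \eqref{nakcond} and \eqref{MizNPot} are exactly what control this: the doubling \eqref{nakcond} lets one pass freely between the values of $\varphi(x_0,\cdot)$ on dyadic scales, while \eqref{MizNPot} forces $t\mapsto t^{\a}\varphi(x_0,t)$ to decay with a genuine power gain, which dominates the logarithm and lets the integral be summed against $\varphi(x_0,r)$. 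I expect the main obstacle to be precisely this far-part bound for the rough kernel: keeping the three-factor split (symbol $b$, data $f$, kernel $\Omega$) compatible with both hypotheses $s'<p$ and $q<s$, and then checking that the power gain in \eqref{MizNPot} genuinely absorbs the logarithm rather than merely the plain kernel integral handled in Theorem~\ref{nakaiPot}.
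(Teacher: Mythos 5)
Two preliminary remarks on the comparison itself. The paper contains no proof of Theorem \ref{nakaiFFPot}: it is quoted from \cite{DingYZ}. The only internal machinery it can be compared with is the paper's treatment of the analogous local statement, namely Lemma \ref{lem5.1.Pot} combined with Theorem \ref{thm3.2.}, which yields Theorem \ref{theor3.3FPot}; that is exactly the machinery you reproduce. Up to the local estimate your argument is essentially sound: the decomposition $[b,I_{\Omega,\a}]f=(b-b_B)I_{\Omega,\a}f-I_{\Omega,\a}\big((b-b_B)f\big)$ over $f=f_1+f_2$, the use of Theorem B for the near part $[b,I_{\Omega,\a}]f_1$ (cleaner, in fact, than the paper's separate handling of $J_1,J_2$), the three-factor H\"older with $\frac1u+\frac1p+\frac1s=1$ (admissible precisely because $s'<p$), and the logarithmic $BMO$ lemma do combine, as in Lemma \ref{lem5.1.Pot} with $\lambda=0$, to give your displayed local estimate uniformly in $x_0$.

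The genuine gap is your final paragraph. Inserting $\|f\|_{L_p(B(x_0,t))}\le\|f\|_{M_{p,\varphi}}\,\varphi(x_0,t)\,|B(x_0,t)|^{1/p}$ into the local estimate and using $\frac np-\frac nq=\a$, what you must prove is
$$
\int_r^{\i}\Big(1+\ln\frac tr\Big)\,t^{\a}\,\varphi(x_0,t)\,\frac{dt}{t}\;\le\; C\,\varphi(x_0,r)
\qquad\text{uniformly in }x_0,\,r,
$$
which is condition \eqref{eq3.6.VZPotcomm} with $\lambda=0$ and $\varphi_1=\varphi_2=\varphi$. Conditions \eqref{nakcond} and \eqref{MizNPot} do not imply this, for two separate reasons. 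First, \eqref{MizNPot} controls a $p$-th power average of $t^{\a}\varphi$, and on an infinite interval H\"older runs the wrong way, so it controls no first-power integral, let alone a log-weighted one. Second, and decisively, the homogeneities do not match: the right-hand side of \eqref{MizNPot} is $r^{\a p}\varphi(x,r)^p$, carrying the gain $r^{\a p}$, while the right-hand side you need is $\varphi(x_0,r)$ with no gain. Concretely, take $\varphi(t)=t^{-\a-\epsilon}$ with small $\epsilon>0$, i.e. the classical Morrey weight $\varphi=r^{(\lambda-n)/p}$ with $\lambda=n-(\a+\epsilon)p\in(0,n-\a p)$. Then \eqref{nakcond} and \eqref{MizNPot} hold, but
$$
\int_r^{\i}\Big(1+\ln\frac tr\Big)t^{\a}\varphi(t)\frac{dt}{t}=C_\epsilon\,r^{-\epsilon},
\qquad \varphi(r)=r^{-\a-\epsilon},
$$
and $r^{-\epsilon}\le C\,r^{-\a-\epsilon}$ fails as $r\to\i$. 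So the summation step cannot be closed under the stated hypotheses; the issue is not whether the ``power gain absorbs the logarithm'' --- even the log-free, first-power inequality is false.

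The underlying reason is a normalization mismatch in the statement as transcribed, not a flaw in your local estimate. In \cite{DingYZ} the same weight sits under a $1/p$-root on the source side and a $1/q$-root on the target side; read literally with this paper's Definition 2.1 (same $\varphi$ against $|B|^{-1/p}$ and $|B|^{-1/q}$), already the non-commutator Theorem \ref{nakaiPot} fails for the weight above, since $f(x)=|x|^{-(n-\lambda)/p}\in M_{p,\varphi}$ while $\varphi(r)^{-1}r^{-n/q}\|I_{\a}f\|_{L_q(B(0,r))}\approx r^{\a}\to\i$. What your local estimate genuinely proves is the two-weight statement: $[b,I_{\Omega,\a}]$ is bounded from $M_{p,\varphi_1}$ to $M_{q,\varphi_2}$ whenever the pair satisfies the global analogue of \eqref{eq3.6.VZPotcomm} --- that is, precisely the global-Morrey version of the paper's own Theorem \ref{theor3.3FPot} (compare Corollary \ref{3.4.PotG} for the non-commutator case). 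A correct one-weight formulation must let the target weight absorb the factor $t^{\a}$ (classically $\varphi_2\approx r^{\a}\varphi_1$), which is what \eqref{eq3.6.VZPotcomm} encodes and \eqref{MizNPot} does not. A last minor point: your alternative branch $q<s$ is not needed here (the theorem assumes $s'<p$), and as sketched it relies on $\|\Omega(\cdot-y)\|_{L_s(B(x_0,r))}\lesssim r^{n/s}$ uniformly in $y$, a bound that requires justification; the same step is glossed over in the paper's \eqref{ves2mPot}.
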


\begin{lem}\label{lem5.1.Pot}
Suppose that $x_0 \in \Rn$, $\Omega \in L_s(S^{n-1})$, $ 1<s\leq\infty$, be a homogeneous of degree zero.
Let $0<\a<n$, $1<p<\frac{n}{\a}$, $b \in CBMO_{p_2,\lambda}^{\{x_0\}}(\Rn)$, $0\le\lambda<\frac{1}{n}$,
$\frac{1}{p}=\frac{1}{p_1}+\frac{1}{p_2}$, $\frac{1}{q}=\frac{1}{p}-\frac{\a}{n}$, $\frac{1}{q_1}=\frac{1}{p_1}-\frac{\a}{n}$.

Then, for $s' \le p$ or $q_1 < s$ the inequality
\begin{equation*}\label{eq5.1.}
\|[b, I_{\Omega,\a}]f\|_{L_q(B(x_0,r))} \lesssim \|b\|_{CBMO_{p_2,\lambda}^{\{x_0\}}} \, r^{\frac{n}{q}}
\int_{2r}^{\i} \Big(1+\ln \frac{t}{r}\Big)
t^{n\lambda-\frac{n}{q_1}-1} \|f\|_{L_{p_1}(B(x_0,t))} dt
\end{equation*}
holds for any ball $B(x_0,r)$ and for all $f \in L_{p_1}^{loc}(\Rn)$.

\end{lem}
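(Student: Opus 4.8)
The plan is to follow the scheme of Lemma \ref{lem3.3.Pot}, now splitting both the function and the weight $b(x)-b(y)$. Fix $B=B(x_0,r)$, write $b_B=b_{B(x_0,r)}$, and decompose $f=f_1+f_2$ with $f_1=f\chi_{2B}$, $f_2=f\chi_{\dual{(2B)}}$ as in \eqref{repr}. Using $b(x)-b(y)=(b(x)-b_B)-(b(y)-b_B)$, the commutator splits as $[b,I_{\Omega,\a}]f(x)=(b(x)-b_B)\,I_{\Omega,\a}f(x)-I_{\Omega,\a}\big((b-b_B)f\big)(x)$, and applying this to $f_1$ and $f_2$ yields four terms
$$J_1=(b-b_B)I_{\Omega,\a}f_1,\quad J_2=I_{\Omega,\a}\big((b-b_B)f_1\big),\quad J_3=(b-b_B)I_{\Omega,\a}f_2,\quad J_4=I_{\Omega,\a}\big((b-b_B)f_2\big).$$
Throughout I would exploit the exponent relations $\tfrac1q=\tfrac1{q_1}+\tfrac1{p_2}$ and $\tfrac1{q_1}=\tfrac1{p_1}-\tfrac\a n$, together with the estimate $\|b-b_{B(x_0,r)}\|_{L_{p_2}(B(x_0,t))}\lesssim (1+\ln\tfrac tr)\|b\|_{CBMO_{p_2,\lambda}^{\{x_0\}}}\,t^{\frac n{p_2}+n\lambda}$ for $t\ge r$, which is furnished by the preceding Lemma on $CBMO$ functions and is the source of the logarithmic weight in the statement.

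For the local terms $J_1,J_2$ I would apply H\"older's inequality with $\tfrac1q=\tfrac1{q_1}+\tfrac1{p_2}$ (respectively $\tfrac1p=\tfrac1{p_1}+\tfrac1{p_2}$) and the $L_{p_1}\to L_{q_1}$ and $L_p\to L_q$ boundedness of $I_{\Omega,\a}$ from Theorem A, which is available since $s'\le p<p_1$ or $q_1<s$ (note $q<q_1$). Combined with $\|b-b_B\|_{L_{p_2}(2B)}\lesssim\|b\|_{CBMO}\,r^{\frac n{p_2}+n\lambda}$, this gives $\|J_1\|_{L_q(B)}+\|J_2\|_{L_q(B)}\lesssim\|b\|_{CBMO}\,r^{\frac n{p_2}+n\lambda}\|f\|_{L_{p_1}(2B)}$. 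I would then absorb this into the right-hand side exactly as in \eqref{sal01Pot}: since $\|f\|_{L_{p_1}(2B)}\le\|f\|_{L_{p_1}(B(x_0,t))}$ for $t\ge 2r$, $\int_{2r}^{\i}t^{n\lambda-\frac n{q_1}-1}dt\approx r^{n\lambda-\frac n{q_1}}$, and $\tfrac nq-\tfrac n{q_1}=\tfrac n{p_2}$, the product $r^{\frac n{p_2}+n\lambda}\|f\|_{L_{p_1}(2B)}$ is dominated by $r^{\frac nq}\int_{2r}^{\i}(1+\ln\tfrac tr)t^{n\lambda-\frac n{q_1}-1}\|f\|_{L_{p_1}(B(x_0,t))}dt$, where the factor $(1+\ln\tfrac tr)\ge 1$ only helps.

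For $J_3$ I would use H\"older with $\tfrac1q=\tfrac1{p_2}+\tfrac1{q_1}$, bound $\|b-b_B\|_{L_{p_2}(B)}\lesssim\|b\|_{CBMO}r^{\frac n{p_2}+n\lambda}$, and estimate $\|I_{\Omega,\a}f_2\|_{L_{q_1}(B)}$ by the far-field bound \eqref{ves2Pot} of Lemma \ref{lem3.3.Pot} applied with the pair $(p_1,q_1)$, namely $r^{\frac n{q_1}}\int_{2r}^{\i}\|f\|_{L_{p_1}(B(x_0,t))}t^{-\frac n{q_1}-1}dt$; this is legitimate because $(p_1,q_1)$ satisfy the same structural hypotheses. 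Collecting powers via $\tfrac n{p_2}+\tfrac n{q_1}=\tfrac nq$ and using $r^{n\lambda}\le t^{n\lambda}\le(1+\ln\tfrac tr)t^{n\lambda}$ for $t\ge 2r$ reproduces the desired bound.

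The main work is $J_4$. Using $\tfrac12|x_0-y|\le|x-y|\le\tfrac32|x_0-y|$ for $x\in B$, $y\in\dual{(2B)}$ and the Fubini reduction of Lemma \ref{lem3.3.Pot}, I would write, for $x\in B$,
$$|J_4(x)|\lesssim\int_{2r}^{\i}\Big(\int_{B(x_0,t)}|\Omega(x-y)|\,|b(y)-b_B|\,|f(y)|\,dy\Big)\frac{dt}{t^{n+1-\a}}.$$
In the regime $s'\le p$ I would bound the inner integral by H\"older with exponents $s,p_2,p_1$ and $(1-\tfrac1s-\tfrac1p)^{-1}$ (valid precisely because $s'\le p$ forces $\tfrac1s+\tfrac1p\le 1$), using $\|\Omega(x-\cdot)\|_{L_s(B(x_0,t))}\lesssim t^{n/s}$, the two-radii $CBMO$ estimate, and $|B(x_0,t)|^{1-\frac1s-\frac1p}\approx t^{n(1-\frac1s-\frac1p)}$; the powers of $t$ collapse to $t^{\frac n{p_1'}+n\lambda}$, so that after division by $t^{n+1-\a}$ the exponent is exactly $n\lambda-\tfrac n{q_1}-1$ (since $\tfrac n{p_1'}-n+\a=-\tfrac n{q_1}$), giving a pointwise-in-$x$ bound and hence the claim upon taking $\|\cdot\|_{L_q(B)}\approx r^{n/q}$. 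In the regime $q_1<s$ (whence $q<s$) I would instead move the $L_q(B)$-norm inside via Minkowski, replace $\|\Omega(\cdot-y)\|_{L_q(B)}\lesssim r^{\frac nq-\frac ns}\|\Omega(\cdot-y)\|_{L_s(B)}\lesssim r^{n/q}$, and apply H\"older in $y$ with $\tfrac1{p_1}+\tfrac1{p_2}+\tfrac1{p'}=1$; the identical power count returns the bound. The delicate point \emph{throughout} $J_4$ is that the averaging radius of $b$ is fixed at $r$ while the integration sweeps over $B(x_0,t)$ with $t\ge 2r$, so that the two-radii $CBMO$ lemma is exactly what supplies the weight $(1+\ln\tfrac tr)$; reconciling this with the homogeneity exponents is the only genuine bookkeeping, and splitting the rough-kernel argument into the two cases $s'\le p$ and $q_1<s$ is the one structural subtlety to keep track of.
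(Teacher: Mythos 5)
Your proposal is correct and follows essentially the same route as the paper: the same decomposition into $J_1,\dots,J_4$ via $b-b_{B}$, the same use of the $L_{p_1}\to L_{q_1}$ and $L_p\to L_q$ boundedness for the local terms with the absorption trick of \eqref{sal01Pot}, the same Fubini--H\"older argument (case $s'\le p$) and Minkowski argument (case $q_1<s$) for the far terms, and the same source of the logarithm, namely the two-radii $CBMO_{p_2,\lambda}^{\{x_0\}}$ lemma (which the paper invokes after splitting $b(y)-b_B$ through $b_{B(x_0,t)}$, while you apply it directly --- an equivalent shortcut). The one step to repair is your claim $\int_{2r}^{\i}t^{n\lambda-\frac{n}{q_1}-1}dt\approx r^{n\lambda-\frac{n}{q_1}}$ in the $J_1,J_2$ absorption, which needs $\lambda<1/q_1$ and is not guaranteed by the hypotheses; either observe that when $\lambda\ge 1/q_1$ the right-hand side of the lemma is infinite for every $f\not\equiv 0$ (so the inequality is vacuous), or follow the paper and insert the always-convergent factor $r^{\frac{n}{q_1}}\int_{2r}^{\i}t^{-1-\frac{n}{q_1}}dt\approx 1$, passing to the exponent $n\lambda-\frac{n}{q_1}-1$ only at the end via $r^{n\lambda}\le t^{n\lambda}$.
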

\begin{proof}
Let $1 < p < \infty$, $0<\a<\frac{n}{p}$, $\frac{1}{p}=\frac{1}{p_1}+\frac{1}{p_2}$, $\frac{1}{q}=\frac{1}{p}-\frac{\a}{n}$, and $\frac{1}{q_1}=\frac{1}{p_1}-\frac{\a}{n}$.
As in the proof of Lemma \ref{lem3.3.Pot}, we represent function $f$ in form
\eqref{repr} and have
\begin{align*}
[b, I_{\Omega,\a}]f(x) & = \big(b(x)-b_{B}\big) I_{\Omega,\a}f_{1}(x) - I_{\Omega,\a} \Big(\big(b(\cdot)-b_{B}\big)f_{1}\Big)(x)
\\
& + \big(b(x)-b_{B}\big) I_{\Omega,\a}f_{2}(x) - I_{\Omega,\a} \Big(\big(b(\cdot)-b_{B}\big)f_{2}\Big)(x)
\\
& \equiv J_1 + J_2 + J_3 + J_4.
\end{align*}

Hence we get
$$
\|[b, I_{\Omega,\a}]f\|_{L_q(B)}\leq \|J_1\|_{L_q(B)}+\|J_2\|_{L_q(B)}+\|J_3\|_{L_q(B)}+\|J_4\|_{L_q(B)}.
$$
From the boundedness of $[b, I_{\Omega,\a}]$ from $L_{p_1}(\Rn)$ to $L_{q_1}(\Rn)$ it follows that:
\begin{align*}
\|J_1\|_{L_q(B)} & \leq \| \big(b(\cdot)-b_{B}\big) [b, I_{\Omega,\a}]f_{1}(\cdot) \|_{L_{q}(\Rn)}
\\
& \le \| \big(b(\cdot)-b_{B}\big) \|_{L_{p_2}(\Rn)} [b, I_{\Omega,\a}]f_{1}(\cdot) \|_{L_{q_1}(\Rn)}
\\
& \leq C \|b\|_{CBMO_{p_2,\lambda}^{\{x_0\}}} \, r^{\frac{n}{p_2}+n\lambda} \,  \| f_1\|_{L_{p_1}(\Rn)}
\\
& = C \|b\|_{CBMO_{p_2,\lambda}^{\{x_0\}}} \, r^{\frac{n}{p_2}+\frac{n}{q_1}+n\lambda} \,  \| f\|_{L_{p_1}(2B)}
\int_{2r}^{\infty} t^{-1-\frac{n}{q_1}} dt
\\
& \lesssim \|b\|_{CBMO_{p_2,\lambda}^{\{x_0\}}} \, r^{\frac{n}{q}+n\lambda} \,
\int_{2r}^{\infty} \Big(1+\ln \frac{t}{r}\Big) \| f \|_{L_{p_1}(B(x_0,t))} t^{-1-\frac{n}{q_1}} dt.
\end{align*}

For $J_2$ we have
\begin{align*}
\| J_2\|_{L_{q}(B)} & \leq \| [b, I_{\Omega,\a}]\big(b(\cdot)-b_{B}\big) f_{1} \|_{L_{q}(\Rn)}
\\
& \lesssim \| (b(\cdot)-b_{B}) f_{1}| \|_{L_{p}(\Rn)}
\\
& \lesssim \| b(\cdot)-b_{B}\|_{L_{p_2}(\Rn)} \| f_{1} \|_{L_{p_1}(\Rn)}
\\
& \lesssim \|b\|_{CBMO_{p_2,\lambda}^{\{x_0\}}} \, r^{\frac{n}{p_2}+\frac{n}{q_1}+n\lambda} \,  \| f\|_{L_{p_1}(2B)}
\int_{2r}^{\infty} t^{-1-\frac{n}{q_1}} dt
\\
& \lesssim \|b\|_{CBMO_{p_2,\lambda}^{\{x_0\}}} \, r^{\frac{n}{p}+n\lambda} \,
\int_{2r}^{\infty} \Big(1+\ln \frac{t}{r}\Big) \| f \|_{L_{p_1}(B(x_0,t))} t^{-1-\frac{n}{q_1}} dt.
\end{align*}

For $J_3$, it is known that $x\in B$, $y\in {\dual (2B)}$,
which implies $\frac{1}{2}|x_0-y| \leq |x-y| \le \frac{3}{2}|x_0-y|$.

When $s' \le p$, by Fubini's theorem and applying H\"{o}lder inequality we have
\begin{align*}
|I_{\Omega,\a}f_2(x)| & \leq c_0 \int_{\dual (2B)} |\Omega(x-y)| \frac{|f(y)|}{|x_0-y|^{n-\a}}dy
\\
& \approx \int_{2r}^{\infty}\int_{2r<|x_0-y|<t} |\Omega(x-y)| |f(y)|dy \, t^{-1-n-\a} dt \notag
\\
&\lesssim \int_{2r}^{\infty}\int_{B(x_0,t)} |\Omega(x-y)| |f(y)|dy \, t^{-1-n-\a} dt  \notag
\\
& \lesssim \int_{2r}^{\infty} \| f \|_{L_{p_1}(B(x_0,t))} \, \|\Omega(x-\cdot)\|_{L_{s}(B(x_0,t))}
\, |B(x_0,t)|^{1-\frac{1}{p_1}-\frac{1}{s}}  \, t^{-1-\frac{n}{p_1}-\a} dt
\\
& \lesssim \int_{2r}^{\infty} \| f \|_{L_{p_1}(B(x_0,t))} \, t^{-1-\frac{n}{q_1}} dt.
\end{align*}

Hence, we get
\begin{align*}
\| J_3\|_{L_{q}(B)} & = \| \big(b(\cdot)-b_{B}\big) I_{\Omega,\a}f_{2}(\cdot) \|_{L_{q}(\Rn)}
\\
& \le \| \big(b(\cdot)-b_{B}\big) \|_{L_{q}(\Rn)} \int_{2r}^{\infty}
\| f \|_{L_{p_1}(B(x_0,t))} \, t^{-1-\frac{n}{q_1}} dt
\\
& \le \| \big(b(\cdot)-b_{B}\big) \|_{L_{p_2}(\Rn)} \, r^{\frac{n}{q_1}} \, \int_{2r}^{\infty}
\| f \|_{L_{p_1}(B(x_0,t))} \, t^{-1-\frac{n}{q_1}} dt
\\
& \lesssim \|b\|_{CBMO_{p_2,\lambda}^{\{x_0\}}} \, r^{\frac{n}{q}+n\lambda} \,
\int_{2r}^{\infty} \Big(1+\ln \frac{t}{r}\Big) \| f \|_{L_{p_1}(B(x_0,t))} t^{-1-\frac{n}{q_1}} dt.
\end{align*}

When $q_1 < s$, by Fubini's theorem and the Minkowski inequality, we get
\begin{align} \label{ves2m}
\|J_3\|_{L_q(B)} & \le
\Big( \int_{B} \big| \int_{2r}^{\i} \int_{B(x_0,t)}|f(y)| |b(x)-b_{B}| |\Omega(x-y)| dy
\frac{dt}{t^{n-\a+1}}\big|^q \Big)^{\frac{1}{q}} \notag
\\
& \le \int_{2r}^{\i} \int_{B(x_0,t)}|f(y)| \, \|(b(\cdot)-b_{B}) \Omega(\cdot-y)\|_{L_q(B)} dy \frac{dt}{t^{n-\a+1}} \notag
\\
& \le \int_{2r}^{\i} \int_{B(x_0,t)}|f(y)| \, \|b(\cdot)-b_{B}\|_{L_{p_2}(B)}
\, \|\Omega(\cdot-y)\|_{L_{q_1}(B)} dy  \, \frac{dt}{t^{n-\a+1}} \notag
\\
& \lesssim  \|b\|_{CBMO_{p_2,\lambda}^{\{x_0\}}} \, r^{\frac{n}{p_2}+n\lambda} \, |B|^{\frac{1}{q_1}-\frac{1}{s}} \, \int_{2r}^{\i} \int_{B(x_0,t)}|f(y)| \, \|\Omega(\cdot-y)\|_{L_{s}(B)} dy \, \frac{dt}{t^{n-\a+1}} \notag
\\
& \lesssim \|b\|_{CBMO_{p_2,\lambda}^{\{x_0\}}} \, r^{\frac{n}{q}+n\lambda} \int_{2r}^{\i}\|f\|_{L_1(B(x_0,t))} \frac{dt}{t^{n-\a+1}}
\\
& \lesssim \|b\|_{CBMO_{p_2,\lambda}^{\{x_0\}}} \, r^{\frac{n}{q}+n\lambda} \int_{2r}^{\i} \Big(1+\ln \frac{t}{r}\Big) \|f\|_{L_{p_1}(B(x_0,t))} \frac{dt}{t^{\frac{n}{q_1}+1}}. \notag
\end{align}

For $x\in B$ by Fubini's theorem and applying H\"{o}lder inequality we have
\begin{align*}
& |I_{\Omega,\a} \Big(\big(b(\cdot)-b_{B}\big)f_{2}\Big)(x)|
\lesssim \int_{\dual (2B)} |b(y)-b_{B}| \, |\Omega(x-y)| \, \frac{|f(y)|}{|x-y|^{n-\a}}dy
\\
& \lesssim \int_{\dual (2B)} |b(y)-b_{B}| \, |\Omega(x-y)| \, \frac{|f(y)|}{|x_0-y|^{n-\a}}dy
\\
& \approx \int_{2r}^{\infty}\int_{2r<|x_0-y|<t}|b(y)-b_{B}| \, |\Omega(x-y)| \, |f(y)|dy \, t^{\a-n-1} dt \notag
\\
& \lesssim \int_{2r}^{\i}\int_{B(x_0,t)} |b(y)-b_{B(x_0,t)}||\Omega(x-y)| \, |f(y)|dy\frac{dt}{t^{n-\a+1}}
\\
&~~~~~~~~~ + \int_{2r}^{\i}|b_{B(x_0,r)}-b_{B(x_0,t)}|
\int_{B(x_0,t)} |\Omega(x-y)| \, |f(y)|dy\frac{dt}{t^{n-\a+1}}
\\
&\lesssim  \int_{2r}^{\i} \, \|(b(\cdot)-b_{B(x_0,t)}) f \|_{L_p(B(x_0,t))}
\, \|\Omega(\cdot-y)\|_{L_{s}(B(x_0,t))} \, |B(x_0,t)|^{1-\frac{1}{p}-\frac{1}{s}}  \, \frac{dt}{t^{n-\a+1}}
\end{align*}
\begin{align*}
& + \int_{2r}^{\i}|b_{B(x_0,r)}-b_{B(x_0,t)}| \|f\|_{L_{p_1}(B(x_0,t))}
\, \|\Omega(\cdot-y)\|_{L_{s}(B(x_0,t))} \, |B(x_0,t)|^{1-\frac{1}{p_1}-\frac{1}{s}}  \, t^{\a-n-1} dt
\\
&\lesssim  \int_{2r}^{\i} \, \|b(\cdot)-b_{B(x_0,t)} \|_{L_{p_2}(B(x_0,t))} \|f\|_{L_{p_1}(B(x_0,t))}
t^{-1-\frac{n}{q_1}}  \, dt
\\
& + \|b\|_{CBMO_{p_2,\lambda}^{\{x_0\}}} \, \int_{2r}^{\i} \Big(1+\ln \frac{t}{r}\Big) \|f\|_{L_{p_1}(B(x_0,t))}
 \, t^{n\lambda-1-\frac{n}{q_1}} dt
\\
& \lesssim \|b\|_{CBMO_{p_2,\lambda}^{\{x_0\}}} \, \int_{2r}^{\i}\Big(1+\ln \frac{t}{r}\Big)
\|f\|_{L_{p_1}(B(x_0,t))} ~ t^{n\lambda-1-\frac{n}{q_1}} dt.
\end{align*}
Then for $J_4$ we have
\begin{align*}
\| J_4\|_{L_{q}(B)} & \leq \| I_{\Omega,\a}\big(b(\cdot)-b_{B}\big) f_2 \|_{L_{q}(\Rn)}
\\
& \lesssim \|b\|_{CBMO_{p_2,\lambda}^{\{x_0\}}} \, r^{\frac{n}{q}} \,
\int_{2r}^{\infty} \Big(1+\ln \frac{t}{r}\Big) \| f \|_{L_{p_1}(B(x_0,t))} t^{n\lambda-1-\frac{n}{q_1}} dt.
\end{align*}

When $q_1 < s$, by Fubini's theorem and the Minkowski inequality, we get

\begin{align} \label{ves2m}
\|I_{\Omega,\a}f_2\|_{L_q(B)} & \le
\Big( \int_{B} \big| \int_{2r}^{\i} \int_{B(x_0,t)}|f(y)| |\Omega(x-y)| dy \frac{dt}{t^{n-a+1}}\big|^q \Big)^{\frac{1}{q}} \notag
\\
& \le \int_{2r}^{\i} \int_{B(x_0,t)}|f(y)| \, \|\Omega(\cdot-y)\|_{L_q(B)} dy \frac{dt}{t^{n-a+1}}\notag
\\
& \le \, |B|^{\frac{1}{q}-\frac{1}{s}} \, \int_{2r}^{\i} \int_{B(x_0,t)}|f(y)| \, \|\Omega(\cdot-y)\|_{L_s(B)} dy
 \, \frac{dt}{t^{n-a+1}} \notag
\\
& \lesssim r^{\frac{n}{q}} \int_{2r}^{\i}\|f\|_{L_1(B(x_0,t))} \frac{dt}{t^{n-a+1}}
\\
& \lesssim r^{\frac{n}{q}} \int_{2r}^{\i}\|f\|_{L_{p_1}(B(x_0,t))} \frac{dt}{t^{\frac{n}{q_1}+1}}. \notag
\end{align}

Now combined by all the above estimates, we end the proof of this Lemma \ref{lem5.1.Pot}.

\end{proof}

The following theorem is true.
\begin{thm} \label{theor3.3FPot}
Suppose that $x_0 \in \Rn$, $\Omega \in L_s(S^{n-1})$ with $ 1<s\leq\infty$, be a homogeneous of degree zero.
Let $0<\a<n$, $1<p<\frac{n}{\a}$, $b \in CBMO_{p_2,\lambda}^{\{x_0\}}(\Rn)$, $0\le\lambda<\frac{1}{n}$,
$\frac{1}{p}=\frac{1}{p_1}+\frac{1}{p_2}$, $\frac{1}{q}=\frac{1}{p}-\frac{\a}{n}$, $\frac{1}{q_1}=\frac{1}{p_1}-\frac{\a}{n}$.
Let also, for $s' \le p$ or $q_1 < s$ the pair $(\varphi_1,\varphi_2)$
satisfy the condition
\begin{equation}\label{eq3.6.VZPotcomm}
\int_{r}^{\infty} \Big(1+\ln \frac{t}{r}\Big) \,
\frac{\es_{t<\tau<\infty} \varphi_1(x_0,\tau) \tau^{\frac{n}{p}}}{t^{\frac{n}{q}-n\lambda+1}}dt \le
 C \,\varphi_2(x_0,r),
\end{equation}
where $C$ does not depend on $r$. Then, the operators $M_{\Omega,b,\a}$ and $[b, I_{\Omega,\a}]$ are bounded from $LM_{p,\varphi_1}^{\{x_0\}}$ to $LM_{q,\varphi_2}^{\{x_0\}}$. Moreover
\begin{equation*}
\|M_{\Omega,b,\a}f\|_{LM_{q,\varphi_2}^{\{x_0\}}} \lesssim \|[b, I_{\Omega,\a}]f\|_{LM_{q,\varphi_2}^{\{x_0\}}} \lesssim \|b\|_{CBMO_{p_2,\lambda}^{\{x_0\}}} \, \|f\|_{LM_{p,\varphi_1}^{\{x_0\}}}.
\end{equation*}
\end{thm}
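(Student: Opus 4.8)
The proof of Theorem \ref{theor3.3FPot} will follow the same two-step structure used for Theorem \ref{3.4.Pot}: first reduce the norm estimate to a weighted Hardy inequality by means of the pointwise/local estimate in Lemma \ref{lem5.1.Pot}, and then invoke Theorem \ref{thm3.2.} to convert condition \eqref{eq3.6.VZPotcomm} into boundedness. The plan is to begin by recalling from Lemma \ref{lem5.1.Pot} the local estimate
\begin{equation*}
\|[b, I_{\Omega,\a}]f\|_{L_q(B(x_0,r))} \lesssim \|b\|_{CBMO_{p_2,\lambda}^{\{x_0\}}} \, r^{\frac{n}{q}}
\int_{2r}^{\i} \Big(1+\ln \tfrac{t}{r}\Big) t^{n\lambda-\frac{n}{q_1}-1} \|f\|_{L_{p_1}(B(x_0,t))}\, dt,
\end{equation*}
which holds under exactly the hypotheses $s'\le p$ or $q_1<s$ assumed here. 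Dividing by $\varphi_2(x_0,r)\,|B(0,r)|^{1/q}$ and taking the supremum over $r>0$ gives
\begin{equation*}
\|[b, I_{\Omega,\a}]f\|_{LM_{q,\varphi_2}^{\{x_0\}}} \lesssim \|b\|_{CBMO_{p_2,\lambda}^{\{x_0\}}}
\sup_{r>0}\varphi_2(x_0,r)^{-1} \int_r^{\i}\Big(1+\ln\tfrac{t}{r}\Big) t^{n\lambda-\frac{n}{q_1}-1}\|f\|_{L_{p_1}(B(x_0,t))}\,dt,
\end{equation*}
where the factor $r^{n/q}$ cancels against $|B(0,r)|^{-1/q}$ up to a constant and the lower limit $2r$ is replaced by $r$ at the cost of a harmless constant.

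The second step is to recognize the remaining supremum as a weighted Hardy operator of the type in Theorem \ref{thm3.2.}. Here I would take the non-decreasing function $g(t)=\|f\|_{L_{p_1}(B(x_0,t))}\,t^{-n/p_1}$ — observe $\|f\|_{L_{p_1}(B(x_0,t))}$ is non-decreasing in $t$, but one must pair it with the right power of $t$ so that $g$ is non-decreasing and so that $v_1(t) g(t)$ reproduces the $LM_{p_1,\varphi_1}^{\{x_0\}}$ norm. Concretely I would set $v_2(r)=\varphi_2(x_0,r)^{-1}$, $v_1(r)=\varphi_1(x_0,r)^{-1} r^{-n/p_1}$, and the weight $w(t)=\big(1+\ln\tfrac{t}{r}\big)\,t^{n\lambda-\frac{n}{q_1}-1+\frac{n}{p_1}}$, noting that $\tfrac{n}{p_1}-\tfrac{n}{q_1}=\a$ so the exponent simplifies to $n\lambda+\a-\tfrac{n}{q}+\tfrac{n}{p}-1$; after the cancellation the resulting condition \eqref{vav02} of Theorem \ref{thm3.2.} is precisely \eqref{eq3.6.VZPotcomm}. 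Applying the theorem then yields
\begin{equation*}
\|[b, I_{\Omega,\a}]f\|_{LM_{q,\varphi_2}^{\{x_0\}}} \lesssim \|b\|_{CBMO_{p_2,\lambda}^{\{x_0\}}}\,\|f\|_{LM_{p_1,\varphi_1}^{\{x_0\}}}.
\end{equation*}

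The first obstacle will be the logarithmic factor $1+\ln\frac{t}{r}$, which depends on the lower endpoint $r$ of the Hardy integral and hence cannot simply be absorbed into a fixed weight $w$; Theorem \ref{thm3.2.} as stated has $w=w(s)$ independent of the endpoint. The cleanest way around this is to verify directly, exactly as in the sufficiency argument of Theorem \ref{thm3.2.}, that \eqref{eq3.6.VZPotcomm} implies the desired supremum bound, carrying the $\ln$ factor through the computation using the monotonicity identity \eqref{vav03}; alternatively one splits $1+\ln\frac{t}{r}$ and handles the logarithmic piece by a standard iteration/summation over dyadic scales. The second point requiring care is the maximal-operator half of the statement: one must check that the pointwise domination $M_{\Omega,b,\a}f(x)\lesssim [b, I_{\Omega,\a}](|f|)(x)$ (up to the obvious absolute-value and sign adjustments inherent in the rough-kernel definitions) holds, so that the bound for $M_{\Omega,b,\a}$ follows immediately from that for the commutator $[b, I_{\Omega,\a}]$, yielding the chain of inequalities in the conclusion. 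Everything else is routine bookkeeping of exponents using $\tfrac1q=\tfrac1p-\tfrac\a n$ and $\tfrac{1}{q_1}=\tfrac{1}{p_1}-\tfrac\a n$.
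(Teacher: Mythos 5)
Your overall strategy is exactly the paper's: the paper proves this theorem in a single sentence, invoking Lemma \ref{lem5.1.Pot} and Theorem \ref{thm3.2.} ``in the same manner as in the proof of Theorem \ref{3.4.Pot}'', and your two-step reduction is that argument written out. Moreover, your observation about the logarithmic factor is correct and is precisely the point the paper's one-line proof hides: since $1+\ln\frac{t}{r}$ depends on the lower endpoint $r$, Theorem \ref{thm3.2.} does not apply verbatim with a fixed weight $w(s)$. The remedy you propose is the right one: rerun the sufficiency argument of Theorem \ref{thm3.2.} with the factor $1+\ln\frac{s}{t}$ carried along, which goes through unchanged because identity \eqref{vav03} and the bound $g(s)\,\ess_{s<\tau<\i}v_1(\tau)\le \ess_{u>0}g(u)v_1(u)$ are insensitive to that factor, and the resulting constant $B$ is then exactly the supremum appearing in \eqref{eq3.6.VZPotcomm}.

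However, your concrete parameter choices in the second step are wrong as written. You take $g(t)=\|f\|_{L_{p_1}(B(x_0,t))}t^{-n/p_1}$ together with $v_1(r)=\varphi_1(x_0,r)^{-1}r^{-n/p_1}$, compensating by putting $t^{+n/p_1}$ into $w$. This double-counts the power $t^{-n/p_1}$: (i) $v_1(t)g(t)=\varphi_1(x_0,t)^{-1}t^{-2n/p_1}\|f\|_{L_{p_1}(B(x_0,t))}$, which is not the $LM_{p_1,\varphi_1}^{\{x_0\}}$ norm; (ii) $g$ is in general \emph{not} non-decreasing (for $f$ supported in a small ball, $g(t)\approx c\,t^{-n/p_1}$ for large $t$), so the hypothesis of Theorem \ref{thm3.2.} fails --- and it is essential, since the sufficiency argument rests on \eqref{vav03}; (iii) the quantity \eqref{vav02} computed with your $w$ and $v_1$ carries a spurious factor $s^{n/p_1}$ and is not \eqref{eq3.6.VZPotcomm}. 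The correct choices are those of the proof of Theorem \ref{3.4.Pot}: $g(t)=\|f\|_{L_{p_1}(B(x_0,t))}$ (genuinely non-decreasing), $v_1(t)=\varphi_1(x_0,t)^{-1}t^{-n/p_1}$, $v_2(t)=\varphi_2(x_0,t)^{-1}$, and the ($r$-dependent, per the previous paragraph) weight $\big(1+\ln\frac{t}{r}\big)t^{n\lambda-\frac{n}{q_1}-1}$. Note also that this argument naturally yields a bound by $\|f\|_{LM_{p_1,\varphi_1}^{\{x_0\}}}$ under \eqref{eq3.6.VZPotcomm} written with $p_1,q_1$ in place of $p,q$; the mixing of indices in the theorem as stated (condition in $p,q$, source space $LM_{p,\varphi_1}^{\{x_0\}}$, but Lemma \ref{lem5.1.Pot} in $p_1,q_1$) is an inconsistency of the paper itself (compare Corollary \ref{gar2CPot} and the abstract), so you should flag it rather than claim the indices match ``precisely''. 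Your remark on $M_{\Omega,b,\a}$ is acceptable: the domination is by the positive operator with $|\Omega|$, $|b(x)-b(y)|$, $|f|$ in place of the signed quantities, and all estimates of Lemma \ref{lem5.1.Pot} are in fact performed on absolute values, so they apply to it.
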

\begin{proof}
The statement of Theorem \ref{theor3.3FPot} follows by Lemma \ref{lem5.1.Pot} and Theorem \ref{thm3.2.} in the same manner as in the proof of Theorem \ref{3.4.Pot}.
\end{proof}
%

For the sublinear commutator of the fractional maximal operator $M_{b,\a}$
and for the linear commutator of the Riesz potential $[b,I_{\a}]$ from Theorem \ref{theor3.3FPot} we get the following new results.
\begin{cor} \label{gar2CPot}
Let $0<\a<n$, $1<p<\frac{n}{\a}$, $b \in CBMO_{p_2,\lambda}^{\{x_0\}}(\Rn)$, $0\le\lambda<\frac{1}{n}$,
$\frac{1}{p}=\frac{1}{p_1}+\frac{1}{p_2}$, $\frac{1}{q}=\frac{1}{p}-\frac{\a}{n}$, $\frac{1}{q_1}=\frac{1}{p_1}-\frac{\a}{n}$, and $(\varphi_1,\varphi_2)$ satisfies the condition \eqref{eq3.6.VZPotcomm}. Then, the operators $M_{b,\a}$ and $[b,I_{\a}]$ are bounded from $LM_{p_1,\varphi_1}^{\{x_0\}}$ to $LM_{q,\varphi_2}^{\{x_0\}}$.
\end{cor}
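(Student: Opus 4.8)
The plan is to obtain Corollary \ref{gar2CPot} as the special case $\Omega\equiv 1$ of Theorem \ref{theor3.3FPot}, so that essentially no new analytic work is required. First I would note that the constant kernel $\Omega\equiv 1$ is homogeneous of degree zero and belongs to $L_s(S^{n-1})$ for every $s\in(1,\i]$; choosing $s=\i$ gives $s'=1$. Since the standing hypothesis $p>1$ forces $s'=1\le p$, the alternative ``$s'\le p$ or $q_1<s$'' in Theorem \ref{theor3.3FPot} is satisfied automatically, with no extra constraint imposed by taking $\Omega$ constant.

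Next I would check that under this choice the rough operators collapse to the classical ones. Reading off the definitions, $I_{\Omega,\a}$ becomes the Riesz potential $I_{\a}$, the linear commutator $[b,I_{\Omega,\a}]$ becomes $[b,I_{\a}]$, and the sublinear maximal commutator $M_{\Omega,b,\a}$ becomes $M_{b,\a}$, since the factor $|\Omega(x-y)|$ is identically $1$. Every remaining hypothesis of the corollary — the ranges of $\a$, $p$, $\lambda$, the membership $b\in CBMO_{p_2,\lambda}^{\{x_0\}}(\Rn)$, the relations $\frac1p=\frac1{p_1}+\frac1{p_2}$, $\frac1q=\frac1p-\frac\a n$, $\frac1{q_1}=\frac1{p_1}-\frac\a n$, and the compatibility condition \eqref{eq3.6.VZPotcomm} on $(\varphi_1,\varphi_2)$ — is identical to that of Theorem \ref{theor3.3FPot}. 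Hence the theorem applies verbatim and yields the boundedness of $M_{b,\a}$ and $[b,I_{\a}]$ from $LM_{p_1,\varphi_1}^{\{x_0\}}$ to $LM_{q,\varphi_2}^{\{x_0\}}$, with the quantitative estimate $\|M_{b,\a}f\|_{LM_{q,\varphi_2}^{\{x_0\}}}\lesssim\|[b,I_{\a}]f\|_{LM_{q,\varphi_2}^{\{x_0\}}}\lesssim\|b\|_{CBMO_{p_2,\lambda}^{\{x_0\}}}\|f\|_{LM_{p_1,\varphi_1}^{\{x_0\}}}$. Here the domain exponent is $p_1$, the integrability of $f$ that already appears in the local estimate of Lemma \ref{lem5.1.Pot}.

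I do not expect a genuine obstacle: the entire content — the local bound on $\|[b,I_{\Omega,\a}]f\|_{L_q(B(x_0,r))}$ obtained through the splitting $f=f_1+f_2$, the $CBMO$ and H\"older estimates on the four terms $J_1,\dots,J_4$, and the passage from the resulting integral inequality to a norm bound via the weighted Hardy inequality of Theorem \ref{thm3.2.} — was already carried out in Lemma \ref{lem5.1.Pot} and Theorem \ref{theor3.3FPot}. If one preferred a self-contained derivation rather than invoking the rough-kernel theorem, I would simply replay those two proofs with $\Omega\equiv 1$; the only simplification is that each step estimating $\|\Omega(\cdot-y)\|_{L_s}$ over a ball reduces to computing the Lebesgue measure of that ball, so the case distinction ``$s'\le p$ or $q_1<s$'' evaporates and a single chain of H\"older estimates delivers the same local inequality, after which Theorem \ref{thm3.2.} finishes the argument exactly as before.
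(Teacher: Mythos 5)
Your proposal is correct and coincides with the paper's own (implicit) argument: the corollary is presented as an immediate specialization of Theorem \ref{theor3.3FPot} to $\Omega\equiv 1$, which, exactly as you note, is homogeneous of degree zero, lies in $L_s(S^{n-1})$ for every $s\in(1,\infty]$, and with $s=\infty$ makes the alternative ``$s'\le p$ or $q_1<s$'' hold automatically, while $I_{\Omega,\alpha}$, $[b,I_{\Omega,\alpha}]$, $M_{\Omega,b,\alpha}$ collapse to $I_{\alpha}$, $[b,I_{\alpha}]$, $M_{b,\alpha}$. Your reading of the domain exponent as $p_1$ (consistent with Lemma \ref{lem5.1.Pot}, the introduction, and the corollary itself, rather than the $p$ appearing in the statement of Theorem \ref{theor3.3FPot}) is the right way to resolve what is a notational slip in the paper, not a gap in your argument.
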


\section{Some applications}

In this section, we shall apply Theorems \ref{3.4.Pot} and \ref{theor3.3FPot} to several
particular operators such as the Marcinkiewicz operator and
fractional powers of the some analytic semigroups.

\subsection{ Marcinkiewicz operator}

Let $S^{n-1}=\{x\in\Rn:|x|=1\}$ be the unit sphere in $\Rn$ equipped with the Lebesgue measure $d\sigma$.
Suppose that $x_0 \in \Rn$, $\Omega \in L_s(S^{n-1})$, $ 1<s\leq\infty$, be a homogeneous of degree zero and satisfy the cancellation condition.

In 1958, Stein \cite{Stein58} defined the Marcinkiewicz integral of higher dimension $\mu_\Omega$ as
\begin{equation*}
\mu_\Omega(f)(x)=\left(\int_0^\infty|F_{\Omega,t}(f)(x)|^2\frac{dt}{t^3}\right)^{1/2},
\end{equation*}
where
$$
F_{\Omega,t}(f)(x)=\int_{|x-y|\leq t}\frac{\Omega(x-y)}{|x-y|^{n-1}}f(y)dy.
$$

Since Stein's work in 1958, the continuity of Marcinkiewicz integral has been extensively studied
as a research topic and also provides useful tools in harmonic analysis \cite{LuDingY, St, Stein93, Torch}.

The Marcinkiewicz operator is defined by (see \cite{TorWang})
\begin{equation*}
\mu_{\Omega,\a}(f)(x)=\left(\int_0^\infty|F_{\Omega,\a,t}(f)(x)|^2\frac{dt}{t^3}\right)^{1/2},
\end{equation*}
where 
$$
F_{\Omega,\a,t}(f)(x)=\int_{|x-y|\leq t}\frac{\Omega(x-y)}{|x-y|^{n-1-\a}}f(y)dy.
$$
Note that $\mu_{\Omega}f=\mu_{\Omega,0}f$.

Let $H$ be the space
$H=\{h:\|h\|=(\int_0^\infty|h(t)|^2dt/t^3)^{1/2}<\i\}$. Then, it is
clear that $\mu_{\Omega,\a}(f)(x)=\|F_{\Omega,\a,t}(x)\|$.

By Minkowski inequality and the conditions on $\Omega$, we get
\begin{equation*}
\mu_{\Omega,\a}(f)(x)\leq\int_{\Rn}\frac{|\Omega(x-y)|}{|x-y|^{n-1-\a}}|f(y)|
\left(\int_{|x-y|}^\infty\frac{dt}{t^3}\right)^{1/2} dy
\leq C I_{\Omega,\a}(f)(x).
\end{equation*}
It is known that $\mu_{\Omega,\a}$ is bounded from $L_p(\Rn)$ to $L_q(\Rn)$ for $p>1$,
and bounded from $L_1(\Rn)$ to $WL_q(\Rn)$ for $p=1$
(see \cite{TorWang}), then from Theorems \ref{3.4.Pot} and \ref{theor3.3FPot} we get
\begin{cor}\label{GARM1}
Suppose that $x_0 \in \Rn$, $\Omega \in L_s(S^{n-1})$, $ 1<s\leq\infty$, be a homogeneous of degree zero and satisfy the cancellation condition. Let $0 < \a < n$, $1 \le p < \frac{n}{\a}$, $\frac{1}{q}=\frac{1}{p}-\frac{\a}{n}$ and for $s' \le p$ or $q_1 < s$ the pair $(\varphi_1,\varphi_2)$ satisfy the condition \eqref{eq3.6.VZPot}. Then $\mu_{\Omega,\a}$ is bounded from $LM_{p,\varphi_1}^{\{x_0\}}$ to $LM_{q,\varphi_2}^{\{x_0\}}$ for $p>1$ and from $M_{1,\varphi_1}^{\{x_0\}}$ to $WLM_{q,\varphi_2}^{\{x_0\}}$ for $p=1$.
\end{cor}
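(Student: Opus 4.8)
The plan is to obtain the boundedness of $\mu_{\Omega,\a}$ as an immediate consequence of the pointwise control by $I_{\Omega,\a}$ already recorded in the discussion above, namely $\mu_{\Omega,\a}(f)(x) \le C\, I_{\Omega,\a}(f)(x)$, combined with Theorem \ref{3.4.Pot}. The only structural fact needed is that both the strong and the weak generalized local Morrey quasinorms are monotone under pointwise a.e.\ domination of nonnegative functions, which reduces the entire argument to a comparison.

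First I would recall the Minkowski-inequality estimate. Since $F_{\Omega,\a,t}(f)(x)$ is the integral of $\Omega(x-y)|x-y|^{-(n-1-\a)}f(y)$ over $|x-y|\le t$, applying Minkowski's inequality in the space $H$ and using $\big(\int_{|x-y|}^\infty t^{-3}\,dt\big)^{1/2}\approx|x-y|^{-1}$ collapses the $H$-norm into the integral of $|\Omega(x-y)|\,|f(y)|\,|x-y|^{-(n-\a)}$, giving the stated bound $\mu_{\Omega,\a}(f)(x)\le C\,I_{\Omega,\a}(f)(x)$ with $C$ depending only on $n$ and $\a$. This is exactly the inequality displayed just before the statement of the corollary.

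Next, for $p>1$: given $f\in LM_{p,\varphi_1}^{\{x_0\}}$, the condition \eqref{eq3.6.VZPot} together with $s'\le p$ or $q<s$ places us in the setting of Theorem \ref{3.4.Pot}, so $\|I_{\Omega,\a}f\|_{LM_{q,\varphi_2}^{\{x_0\}}}\lesssim\|f\|_{LM_{p,\varphi_1}^{\{x_0\}}}$. Because $0\le\mu_{\Omega,\a}(f)\le C\,I_{\Omega,\a}(f)$ a.e., for every ball $B(x_0,r)$ one has $\|\mu_{\Omega,\a}f\|_{L_q(B(x_0,r))}\le C\,\|I_{\Omega,\a}f\|_{L_q(B(x_0,r))}$; multiplying by $\varphi_2(0,r)^{-1}|B(0,r)|^{-1/q}$ and taking the supremum over $r>0$ yields $\|\mu_{\Omega,\a}f\|_{LM_{q,\varphi_2}^{\{x_0\}}}\le C\,\|I_{\Omega,\a}f\|_{LM_{q,\varphi_2}^{\{x_0\}}}$, and chaining the two bounds gives the strong-type conclusion. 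For $p=1$, the same pointwise domination gives $\{x:\mu_{\Omega,\a}f(x)>\lambda\}\subset\{x:I_{\Omega,\a}f(x)>\lambda/C\}$ for each $\lambda>0$, so the weak-$L_q$ functionals on each ball satisfy $\|\mu_{\Omega,\a}f\|_{WL_q(B(x_0,r))}\le C\,\|I_{\Omega,\a}f\|_{WL_q(B(x_0,r))}$, and the weak-type part of Theorem \ref{3.4.Pot} transfers verbatim. The monotonicity steps are entirely routine bookkeeping; the single substantive ingredient is the comparison $\mu_{\Omega,\a}\lesssim I_{\Omega,\a}$, which the preceding Minkowski computation already supplies, so I expect no genuine obstacle here.
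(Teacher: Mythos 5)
Your proposal is correct and follows essentially the same route as the paper: the pointwise bound $\mu_{\Omega,\a}(f)(x)\le C\,I_{\Omega,\a}(f)(x)$ obtained via Minkowski's inequality (stated in the paper just before the corollary), combined with Theorem \ref{3.4.Pot} and the monotonicity of the strong and weak generalized local Morrey quasinorms under pointwise domination. The paper leaves these monotonicity steps implicit, but your bookkeeping fills them in exactly as intended.
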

\begin{cor}\label{GARM2}
Suppose that $x_0 \in \Rn$, $\Omega \in L_s(S^{n-1})$, $ 1<s\leq\infty$, be a homogeneous of degree zero and satisfy the cancellation condition. Let $0<\a<n$, $1<p<\frac{n}{\a}$, 
$b \in CBMO_{p_2,\lambda}^{\{x_0\}}(\Rn)$, $0\le\lambda<\frac{1}{n}$, $\frac{1}{p}=\frac{1}{p_1}+\frac{1}{p_2}$, $\frac{1}{q}=\frac{1}{p}-\frac{\a}{n}$, $\frac{1}{q_1}=\frac{1}{p_1}-\frac{\a}{n}$ and for $s' \le p$  
or $q_1 < s$ the pair $(\varphi_1,\varphi_2)$ satisfy the condition \eqref{eq3.6.VZPot}. Then $[a,\mu_{\Omega,\a}]$ is bounded from $LM_{p,\varphi_1}^{\{x_0\}}$ to $LM_{q,\varphi_2}^{\{x_0\}}$.
\end{cor}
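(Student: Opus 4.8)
The plan is to derive Corollary \ref{GARM2} from the commutator bound for the rough fractional integral, Theorem \ref{theor3.3FPot}, by dominating the Marcinkiewicz commutator pointwise by the positive kernel operator that is already controlled in Lemma \ref{lem5.1.Pot}. Writing the commutator in the Hilbert space $H$ as $[b,\mu_{\Omega,\a}]f(x) = \big\| b(x)F_{\Omega,\a,t}(f)(x) - F_{\Omega,\a,t}(bf)(x)\big\|$, the first thing I would record is that for each $t>0$
$$b(x)F_{\Omega,\a,t}(f)(x) - F_{\Omega,\a,t}(bf)(x) = \int_{|x-y|\le t}\frac{\Omega(x-y)}{|x-y|^{n-1-\a}}\big[b(x)-b(y)\big]f(y)\,dy .$$
Here the only dependence on $t$ sits in the cut-off $\chi_{\{t\ge|x-y|\}}$, so Minkowski's integral inequality lets me pull the $H$-norm under the $y$-integral.

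The key elementary computation is then
$$\Big\| |x-y|^{-(n-1-\a)}\chi_{\{t\ge|x-y|\}}\Big\|_{H} = \Big(|x-y|^{-2(n-1-\a)}\int_{|x-y|}^{\i}\frac{dt}{t^{3}}\Big)^{1/2} = \frac{1}{\sqrt2}\,|x-y|^{-(n-\a)},$$
which turns the $(n-1-\a)$-kernel of the Marcinkiewicz integral into exactly the $(n-\a)$-kernel of $I_{\Omega,\a}$ and yields the pointwise majorization
$$\big|[b,\mu_{\Omega,\a}]f(x)\big| \lesssim \int_{\Rn}\frac{|\Omega(x-y)|\,|b(x)-b(y)|\,|f(y)|}{|x-y|^{n-\a}}\,dy =: \mathcal{T}_{\Omega,b,\a}(f)(x).$$

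Finally I would observe that $\mathcal{T}_{\Omega,b,\a}$ is precisely the nonnegative operator estimated in Lemma \ref{lem5.1.Pot}: splitting $f=f_1+f_2$ as in \eqref{repr} and decomposing through $b-b_B$ exactly as there, every step of that proof uses only $|\Omega|$, the local Campanato bounds on $b$, and the $L_{p_1}\!\to\!L_{q_1}$ and $L_p\!\to\!L_q$ boundedness of the fractional integral taken with the nonnegative kernel $|\Omega(x-y)|\,|x-y|^{-(n-\a)}$ (which holds by Theorem A, since $\|\,|\Omega|\,\|_{L_s}=\|\Omega\|_{L_s}$); all of these are insensitive to replacing $[b(x)-b(y)]$ by $|b(x)-b(y)|$. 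Hence the local estimate of Lemma \ref{lem5.1.Pot} holds verbatim for $\mathcal{T}_{\Omega,b,\a}(f)$, and feeding it into the weighted Hardy inequality of Theorem \ref{thm3.2.} with $v_2(r)=\varphi_2(x_0,r)^{-1}$, $v_1(r)=\varphi_1(x_0,r)^{-1}r^{-n/p}$ and the logarithmically weighted $w$, together with hypothesis \eqref{eq3.6.VZPotcomm}, gives $\|[b,\mu_{\Omega,\a}]f\|_{LM_{q,\varphi_2}^{\{x_0\}}} \lesssim \|b\|_{CBMO_{p_2,\lambda}^{\{x_0\}}}\,\|f\|_{LM_{p,\varphi_1}^{\{x_0\}}}$, which is the assertion. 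The only genuinely new ingredient, and thus the main obstacle, is the passage from the Marcinkiewicz commutator to $\mathcal{T}_{\Omega,b,\a}$: one must verify the integral representation of the difference above and check that Minkowski's inequality in $H$ combined with the $dt/t^{3}$ integration reproduces the $|x-y|^{-(n-\a)}$ kernel; once this reduction is secured, the conclusion is an immediate transcription of the proof of Theorem \ref{theor3.3FPot}.
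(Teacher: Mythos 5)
Your proof is correct and follows essentially the same route as the paper: the paper likewise dominates the Marcinkiewicz commutator pointwise by the rough fractional commutator with kernel $|\Omega(x-y)|\,|b(x)-b(y)|\,|x-y|^{-(n-\a)}$ via Minkowski's inequality in $H$ (your computation of $\bigl\|\chi_{\{t\ge|x-y|\}}\bigr\|_{H}=\tfrac{1}{\sqrt 2}|x-y|^{-1}$ just makes this explicit), and then invokes the machinery of Lemma \ref{lem5.1.Pot} and Theorem \ref{theor3.3FPot}, whose estimates indeed only involve absolute values and so apply verbatim to your positive operator $\mathcal{T}_{\Omega,b,\a}$. Note that you (correctly) use the logarithmic condition \eqref{eq3.6.VZPotcomm} in the final step, whereas the corollary's statement cites \eqref{eq3.6.VZPot}; this is evidently a typo in the paper, since Theorem \ref{theor3.3FPot} requires \eqref{eq3.6.VZPotcomm}.
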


\

\subsection{Fractional powers of the some analytic semigroups}

The theorems of the previous sections can be applied to various operators which are estimated from above by Riesz potentials. We give some examples.

Suppose that $L$ is a linear operator on $L_2$ which generates an analytic semigroup $e^{-tL}$ with the kernel $p_t(x,y)$ satisfying a Gaussian upper bound, that is,
\begin{equation}\label{kern0}
|p_t(x,y)|\leq {c_1 \over{ t^{{n}/{2}} }} e^{-c_2{{|x-y|^2}\over t}}
\end{equation}
for  $x,y\in {\mathbb R}^n$ and all $t>0$, where $c_1,\, c_2 > 0$ are independent of $x$, $y$ and $t$.

For $0<\alpha<n,$ the fractional powers $L^{-\alpha/2}$ of the operator $L$ are defined by
\begin{equation*}\label{dy1}
L^{-\alpha/2}f(x)={1\over {\Gamma(\alpha/2)}}\int_0^{\infty} e^{-tL}f (x)\frac{dt}{t^{-\alpha/2+1}}.
\end{equation*}

Note that if $L=-\triangle$ is the Laplacian on  ${\mathbb  R}^n$,
then $L^{-\alpha/2}$ is the Riesz potential $I_{\alpha}$. See, for
example, Chapter 5 in \cite{St}.

\begin{thm}\label{DY}
Let condition \eqref{kern0} be satisfied. Moreover, let $1 \le p < \infty$,
$0<\a<\frac{n}{p}$, $\frac{1}{q}=\frac{1}{p}-\frac{\a}{n}$,
$(\varphi_1,\varphi_2)$ satisfy condition \eqref{eq3.6.VZPot}.
Then $L^{-\alpha/2}$ is bounded from $LM_{p,\varphi_1}^{\{x_0\}}$ to
$LM_{q,\varphi_2}^{\{x_0\}}$ for $p>1$ and from $M_{1,\varphi_1}^{\{x_0\}}$ to $WLM_{q,\varphi_2}^{\{x_0\}}$ for $p=1$.
\end{thm}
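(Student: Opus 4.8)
The plan is to reduce Theorem~\ref{DY} to the already-established mapping properties of the Riesz potential $I_{\a}$ by showing that $L^{-\a/2}$ is pointwise dominated by $I_{\a}$ applied to $|f|$; once this majorization is in hand, the theorem is immediate from Corollary~\ref{gar1Pot}. First I would write $e^{-tL}f(x)=\int_{\Rn} p_t(x,y) f(y)\,dy$, insert this into the defining formula for $L^{-\a/2}$, and apply the Gaussian bound \eqref{kern0} to obtain, for every $x\in\Rn$,
\begin{equation*}
|L^{-\a/2}f(x)| \le \frac{c_1}{\Gamma(\a/2)} \int_0^{\i} \int_{\Rn} t^{-n/2} e^{-c_2 |x-y|^2/t} |f(y)|\, dy \, \frac{dt}{t^{-\a/2+1}}.
\end{equation*}
Since the integrand is nonnegative, Tonelli's theorem permits interchanging the order of integration and performing the $t$-integral first:
\begin{equation*}
|L^{-\a/2}f(x)| \le \frac{c_1}{\Gamma(\a/2)} \int_{\Rn} |f(y)| \Big( \int_0^{\i} t^{\frac{\a-n}{2}-1} e^{-c_2 |x-y|^2/t}\, dt \Big) dy.
\end{equation*}

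The second step is to evaluate the inner integral by the subordination substitution $s = c_2 |x-y|^2 / t$, which converts it into a Gamma function and reproduces precisely the Riesz kernel: for $0<\a<n$ one gets
\begin{equation*}
\int_0^{\i} t^{\frac{\a-n}{2}-1} e^{-c_2 |x-y|^2/t}\, dt = c_2^{\frac{\a-n}{2}} \, \Gamma\Big(\tfrac{n-\a}{2}\Big) \, |x-y|^{\a-n}.
\end{equation*}
Here the hypothesis $0<\a<n$ guarantees the convergence of the Gamma integral $\Gamma(\tfrac{n-\a}{2})$. Substituting this back yields the pointwise majorization
\begin{equation*}
|L^{-\a/2}f(x)| \le C\, I_{\a}(|f|)(x), \qquad x\in\Rn,
\end{equation*}
with $C$ depending only on $n$, $\a$, $c_1$, $c_2$.

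Finally, because the (weak) local Morrey quasinorms are monotone with respect to the pointwise order of nonnegative functions, and because $I_{\a}$ (that is, $I_{\Omega,\a}$ with $\Omega\equiv1$) is bounded from $LM_{p,\varphi_1}^{\{x_0\}}$ to $LM_{q,\varphi_2}^{\{x_0\}}$ for $p>1$ and from $LM_{1,\varphi_1}^{\{x_0\}}$ to $WLM_{q,\varphi_2}^{\{x_0\}}$ for $p=1$ under condition \eqref{eq3.6.VZPot} by Corollary~\ref{gar1Pot}, the asserted boundedness of $L^{-\a/2}$ follows directly from the pointwise bound. I expect the only nonroutine point to be the subordination computation of the $t$-integral; there is no genuine obstacle, since the Gaussian bound \eqref{kern0} is tailored exactly so that integration against the weight $t^{\a/2-1}$ regenerates the homogeneous kernel $|x-y|^{\a-n}$, after which everything is monotonicity of norms together with the citation of Corollary~\ref{gar1Pot}.
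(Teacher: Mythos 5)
Your proposal is correct and follows essentially the same route as the paper: the paper's proof also rests on the pointwise domination $|L^{-\alpha/2}f(x)| \lesssim I_{\alpha}(|f|)(x)$ followed by the already-established boundedness of $I_{\alpha}$ on the generalized local Morrey spaces. The only difference is that the paper cites \cite{DY} for this pointwise bound, whereas you derive it yourself via the (correct) subordination/Gamma-integral computation, which makes the argument self-contained but is not a different method.
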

\begin{proof}
Since the  semigroup $e^{-tL}$ has the kernel $p_t(x,y)$ which
satisfies condition \eqref{kern0}, it follows that
$$
|L^{-\alpha/2}f(x)| \lesssim I_{\alpha}(|f|)(x)
$$
(see \cite{DY}). Hence by the aforementioned theorems we have
$$
\|L^{-\alpha/2}f\|_{M_{q,\varphi_2}^{\{x_0\}}} \lesssim
\|I_{\alpha}(|f|)\|_{M_{q,\varphi_2}^{\{x_0\}}} \lesssim
\|f\|_{M_{p,\varphi_1}^{\{x_0\}}}.
$$
\end{proof}

Let $b$ be a locally integrable function on $\Rn$, the commutator of $b$ and $L^{-\alpha/2}$
is defined as follows
$$
[b,L^{-\alpha/2}]f(x)=b(x) L^{-\alpha/2}f(x) - L^{-\alpha/2}(bf)(x).
$$

In \cite{DY} extended the result of \cite{Chanillo} from $(-\Delta)$ to the more general operator
$L$ defined above. More precisely, they showed that when $b \in BMO(\Rn)$, then the commutator
operator $[b,L^{-\alpha/2}]$ is bounded from $L_p(\Rn)$ to $L_q(\Rn)$ for $1<p<q<\infty$ and $\frac{1}{q}=\frac{1}{p}-\frac{\a}{n}$. Then from Theorem \ref{theor3.3FPot} we get

\begin{thm}\label{DYCom}
Let condition \eqref{kern0} be satisfied. Moreover, let
$0<\a<n$, $1<p<\frac{n}{\a}$, $b \in CBMO_{p_2,\lambda}^{\{x_0\}}(\Rn)$, $0\le\lambda<\frac{1}{n}$, 
$\frac{1}{p}=\frac{1}{p_1}+\frac{1}{p_2}$, $\frac{1}{q}=\frac{1}{p}-\frac{\a}{n}$, and $\frac{1}{q_1}=\frac{1}{p_1}-\frac{\a}{n}$, and
$(\varphi_1,\varphi_2)$ satisfies the condition \eqref{eq3.6.VZPotcomm}.
Then $[b,L^{-\alpha/2}]$ is bounded from $LM_{p,\varphi_1}^{\{x_0\}}$ to $LM_{q,\varphi_2}^{\{x_0\}}$.
\end{thm}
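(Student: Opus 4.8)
The plan is to deduce the statement from the commutator estimate for the Riesz potential already established in Theorem~\ref{theor3.3FPot}, by means of a pointwise domination coming from the Gaussian bound \eqref{kern0}.

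First I would write the commutator as
$$
[b,L^{-\alpha/2}]f(x)=b(x)\,L^{-\alpha/2}f(x)-L^{-\alpha/2}(bf)(x)=L^{-\alpha/2}\big((b(x)-b(\cdot))f\big)(x),
$$
so that for each fixed $x$ the operator $L^{-\alpha/2}$ acts on the function $y\mapsto(b(x)-b(y))f(y)$. Because the kernel of $e^{-tL}$ obeys \eqref{kern0}, the scalar domination $|L^{-\alpha/2}g(x)|\lesssim I_\a(|g|)(x)$ of \cite{DY} holds with a constant independent of $x$. Applying it to $g=(b(x)-b(\cdot))f$ gives the pointwise bound
$$
\big|[b,L^{-\alpha/2}]f(x)\big|\lesssim\int_{\Rn}\frac{|b(x)-b(y)|}{|x-y|^{n-\a}}\,|f(y)|\,dy.
$$

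The right-hand side is precisely the sublinear (absolute-value) commutator of the Riesz potential, that is, the object bounded in Lemma~\ref{lem5.1.Pot} and Theorem~\ref{theor3.3FPot} in the special case $\Omega\equiv1$. For $\Omega\equiv1$ we have $\Omega\in L_s(S^{n-1})$ with $s=\infty$, whence $s'=1\le p$ holds automatically for $p>1$, so all the hypotheses of Lemma~\ref{lem5.1.Pot} are satisfied. This yields the local estimate
$$
\big\|[b,L^{-\alpha/2}]f\big\|_{L_q(B(x_0,r))}\lesssim\|b\|_{CBMO_{p_2,\lambda}^{\{x_0\}}}\,r^{\frac{n}{q}}\int_{2r}^{\i}\Big(1+\ln\frac{t}{r}\Big)\,t^{n\lambda-\frac{n}{q_1}-1}\,\|f\|_{L_{p_1}(B(x_0,t))}\,dt
$$
for every ball $B(x_0,r)$.

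It then remains to pass from this local estimate to the global Morrey bound. I would feed the estimate into the weighted Hardy inequality of Theorem~\ref{thm3.2.}, choosing $v_2(r)=\varphi_2(x_0,r)^{-1}$ and $v_1$, $w$ exactly as in the proof of Theorem~\ref{theor3.3FPot}, so that the factor $(1+\ln(t/r))\,t^{n\lambda-n/q_1-1}$ is absorbed into $w$; condition \eqref{eq3.6.VZPotcomm} is then precisely the finiteness requirement \eqref{vav02}, and Theorem~\ref{thm3.2.} delivers $\|[b,L^{-\alpha/2}]f\|_{LM_{q,\varphi_2}^{\{x_0\}}}\lesssim\|b\|_{CBMO_{p_2,\lambda}^{\{x_0\}}}\,\|f\|_{LM_{p,\varphi_1}^{\{x_0\}}}$. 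The only step needing genuine care is the initial reduction: one must verify that the scalar domination $|L^{-\alpha/2}g(x)|\lesssim I_\a(|g|)(x)$ survives when $g$ carries the frozen factor $b(x)-b(\cdot)$. This is immediate because \eqref{kern0} is uniform in $x$ and the whole reduction is pointwise; once it is in place, the theorem follows from Theorem~\ref{theor3.3FPot} (equivalently Corollary~\ref{gar2CPot}) with $\Omega\equiv1$ with no further work.
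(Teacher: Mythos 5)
Your argument is correct and reaches the theorem by essentially the reduction the paper intends, but it is considerably more explicit and more self-contained than what the paper actually writes. The paper's proof of Theorem \ref{DYCom} consists of recalling from \cite{DY} that $[b,L^{-\alpha/2}]$ maps $L_p(\Rn)$ to $L_q(\Rn)$ when $b\in BMO(\Rn)$, and then asserting that the result follows ``from Theorem \ref{theor3.3FPot}''; the mechanism is left implicit, and the citation alone is not quite adequate, both because Theorem \ref{theor3.3FPot} concerns the specific operator $[b,I_{\Omega,\a}]$ and so cannot be applied as a black box to $[b,L^{-\alpha/2}]$, and because the present hypothesis is $b\in CBMO_{p_2,\lambda}^{\{x_0\}}(\Rn)$, a strictly larger class than $BMO(\Rn)$, so the Duong--Yan $L_p\to L_q$ bound does not directly apply to such $b$. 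Your pointwise domination $|[b,L^{-\alpha/2}]f(x)|\lesssim\int_{\Rn}|b(x)-b(y)|\,|x-y|^{\a-n}|f(y)|\,dy$, extracted from the kernel bound that \eqref{kern0} yields for $L^{-\alpha/2}$, is exactly the step that makes the paper's sketch work: it mirrors the paper's own proof of Theorem \ref{DY} and removes any need for the Duong--Yan commutator theorem, since the near-ball pieces in Lemma \ref{lem5.1.Pot} are handled by the Lebesgue boundedness of $I_\a$ itself plus H\"older, not by a commutator bound. Two points you should state more carefully, though both are imprecisions the paper itself commits at the same spots. First, Lemma \ref{lem5.1.Pot} and Theorem \ref{theor3.3FPot} as \emph{stated} bound the linear commutator $[b,I_{\a}]$ and the maximal commutator $M_{b,\a}$; your sublinear majorant dominates these rather than being dominated by them, so the statements cannot be invoked verbatim. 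What you actually use is that every estimate in the proof of Lemma \ref{lem5.1.Pot} (the pieces $J_1,\dots,J_4$) is carried out on absolute values, hence the \emph{proof} does bound the sublinear object --- the same tacit observation the paper needs when it asserts $\|M_{\Omega,b,\a}f\|\lesssim\|[b,I_{\Omega,\a}]f\|$. Second, the kernel $(1+\ln (t/r))\,t^{n\lambda-\frac{n}{q_1}-1}$ depends on both $r$ and $t$, so it cannot literally be ``absorbed into $w$'' in Theorem \ref{thm3.2.}; rather, the sufficiency half of that theorem extends verbatim to such two-variable kernels, with \eqref{eq3.6.VZPotcomm} playing the role of \eqref{vav02}, which is again what the paper does silently in deducing Theorem \ref{theor3.3FPot}. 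With those two remarks spelled out, your write-up is a complete proof, and in fact a cleaner one than the paper's.
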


Property \eqref{kern0} is satisfied for large classes of
differential operators (see, for example \cite{BurGul2}).
In \cite{BurGul2} also other examples of operators which are estimates
from above by Riesz potentials are given. In these cases Theorem \ref{3.4.Pot} and \ref{theor3.3FPot}
are also applicable for proving boundedness of those operators and commutators from $LM_{p,\varphi_1}^{\{x_0\}}$ to
$LM_{q,\varphi_2}^{\{x_0\}}$.

\



\begin{thebibliography}{99}
\bibitem{Adams} D.R. Adams, \textit{A note on Riesz potentials},
Duke Math. {\bf 42} (1975), 765-778.

\bibitem{AkbGulMus} Ali Akbulut, V.S. Guliyev and R. Mustafayev,
\textit{ Boundedness of the maximal operator and
singular integral operator in generalized Morrey spaces},
Mathematica Bohemica, 137 (1) 2012, 27-43.

\bibitem{AlvLanLakey} J. Alvarez, M. Guzman-Partida, J. Lakey,
\textit{Spaces of bounded $\lambda$-central mean oscillation, Morrey spaces,
and $\lambda$-central Carleson measures}, Collect. Math., 51 (2000), 1-47.

\bibitem{Beurl} A. Beurling,
\textit{Construction and analysis of some convolution algebras},
Ann. Inst. Fourier (Grenoble), 14 (1964), 1–32.

\bibitem{BurGulHus1}
  V.I. Burenkov, H.V. Guliyev, V.S. Guliyev,
  \textit{ Necessary and sufficient conditions for boundedness
  of the fractional maximal operators in the local Morrey-type spaces,}
   J. Comput. Appl. Math. {\bf 208} (1) (2007), 280-301.

\bibitem{BurGul2}
  V.I. Burenkov, V.S. Guliyev,
  \textit{ Necessary and sufficient conditions for
  the boundedness of the Riesz potential in local Morrey-type
  spaces,} Potential Anal. {\bf 30} (3) (2009), 211-249.

\bibitem{BurGogGulMus1} V. Burenkov, A. Gogatishvili, V.S. Guliyev, R. Mustafayev,
\textit {Boundedness of the fractional maximal operator in local Morrey-type spaces,}
Complex Var. Elliptic Equ. {\bf 55} (8-10) (2010), 739-758.

\bibitem{BurGogGulMus2} V. Burenkov, A. Gogatishvili, V.S. Guliyev, R. Mustafayev,
\textit {Boundedness of the Riesz potential in local Morrey-type spaces,}
Potential Anal. {\bf 35} (1) (2011), 67-87.

\bibitem{Chanillo} S. Chanillo,
\textit{ A note on commutators}, Indiana Univ. Math. J. 23 (1982), 7-16.

\bibitem{CarPickSorStep}
M. Carro, L. Pick, J. Soria, V.D. Stepanov,
\textit{ On embeddings between classical Lorentz spaces},
Math. Inequal. Appl. {\bf 4} (3) (2001), 397-428.

\bibitem{ChFra}
F. Chiarenza, M. Frasca,
\textit{ Morrey spaces and Hardy-Littlewood maximal
function}, Rend Mat. {\bf 7} (1987), 273-279.

\bibitem{ChFraL1}
F. Chiarenza, M. Frasca, P. Longo,
\textit{ Interior $W^{2,p}$-estimates for
nondivergence elliptic equations with discontinuous coefficients},
Ricerche Mat. {\bf 40} (1991), 149-168.

\bibitem{ChFraL2}
F. Chiarenza, M. Frasca, P. Longo,
\textit{ $W^{2,p}$-solvability of Dirichlet problem for
nondivergence elliptic equations with VMO coefficients},
Trans. Amer. Math. Soc. {\bf 336 } (1993), 841-853.


\bibitem{CRW}
  R. Coifman, R. Rochberg, G. Weiss,
  \textit{ Factorization theorems for Hardy spaces in several variables},
  Ann. of Math. {\bf 103} (2) (1976), 611-635.

\bibitem{Ding} Y. Ding,
\textit{Weak type bounds for a class of rough operators with power weights},
Proc. Amer. Math. Soc. 125 (1997), 2939-2942.

\bibitem{DingLu1} Y. Ding and S. Z. Lu,
\textit{Weighted norm inequalities for fractional integral operators with rough kernel},
Canad. J. Math. 50 (1998), 29-39.

\bibitem{DingYZ} Y. Ding, D. Yang, Z. Zhou,
\textit{Boundedness of sublinear operators and commutators on $L^{p,\omega}(\Rn)$},
Yokohama Math. J.  46 (1998), 15-27.

\bibitem{DingLu2}  Y. Ding and S. Z. Lu,
\textit{Higher order commutators for a class of rough operators},
Ark. Mat. 37 (1999), 33-44.

\bibitem{Duoandikoetxea} J. Duoandikoetxea, \textit{Fourier Analysis}, American Mathematical Society,
Providence, Rhode Island, 2000.

\bibitem{DY}
X.T. Duong, L.X. Yan, \textit{On commutators of fractional integrals},
Proc. Amer. Math. Soc. {\bf 132} (12) (2004), 3549-3557.

\bibitem{FazRag1}
G. Di Fazio, M.A. Ragusa,
\textit{ Commutators and Morrey spaces},
Boll. Un. Mat. Ital. 5A (7) (1991), 323-332.

\bibitem{FazRag2} G. Di Fazio, M.A. Ragusa,
\textit{ Interior estimates in Morrey spaces for
strong solutions to nondivergence form equations with discontinuous
coefficients}, J. Funct. Anal. 112 (1993), 241-256.

\bibitem{FazPalRag} G. Di Fazio, D. K. Palagachev and M. A. Ragusa,
\textit{Global Morrey regularity of strong solutions to the Dirichlet problem for elliptic equations with
discontinuous coefficients}, J. Funct. Anal, 166 (1999), 179-196.

\bibitem{Feicht} H. Feichtinger,
\textit{ An elementary approach to Wiener's third Tauberian theorem on Euclidean $n$-space},
Proceedings, Conference at Cortona 1984, Sympos. Math., 29, Academic Press 1987.

\bibitem{GarRub} J. Garcia-Cuerva and J.L. Rubio de Francia,
\textit{ Weighted Norm Inequalities and Related Topics},
North-Holland Math. 16, Amsterdam, 1985.

\bibitem{GulDoc}
 V.S. Guliyev, \textit{ Integral operators on function spaces on the homogeneous groups and
  on domains in $\Rn$}. Doctor's degree dissertation,
  Mat. Inst. Steklov, Moscow, 1994, 329 pp. (in Russian)

\bibitem{GulBook}
V.S. Guliyev, \textit{ Function spaces, Integral Operators and Two Weighted
  Inequalities on Homogeneous Groups. Some Applications},
  Cashioglu, Baku, 1999, 332 pp. (in Russian)

\bibitem{GulJIA} V.S. Guliyev,
\textit{ Boundedness of the maximal, potential and singular operators in the generalized Morrey spaces},
J. Inequal. Appl.  2009, Art. ID 503948, 20 pp.

\bibitem{GULAKShIEOT2012} V.S. Guliyev, S.S. Aliyev, T. Karaman, P. S. Shukurov,
\textit{Boundedness of sublinear operators and commutators on generalized Morrey Space},
Int. Eq. Op. Theory. 71 (3) (2011), pp. 327-355.

\bibitem{LuYang1} S.Z. Lu and D.C. Yang,
\textit{The central BMO spaces and Littlewood-Paley operators},
Approx. Theory Appl. (N.S.), 11 (1995), 72-94.

\bibitem{LLY}
G. Lu, S. Lu, D. Yang,
\textit{ Singular integrals and commutators on homogeneous groups},
Analysis Mathematica,  {\bf 28} (2002), 103-134.

\bibitem{LuWu} S.Z. Lu and Q. Wu,
\textit{ CBMO estimates for commutators and multilinear singular integrals},
Math. Nachr., 276 (2004), 75-88.

\bibitem{LuDingY}
S. Lu, Y. Ding, D. Yan,
\textit{ Singular integrals and related topics},
World Scientific Publishing, Singapore, 2006.

\bibitem{Miz}
T. Mizuhara, \textit{ Boundedness of some classical operators on
  generalized Morrey spaces},  Harmonic Analysis (S. Igari, Editor), ICM 90 Satellite
  Proceedings, Springer - Verlag, Tokyo (1991), 183-189.

\bibitem{Morrey} C.B. Morrey,
 \textit{ On the solutions of quasi-linear elliptic partial
  differential equations}, Trans. Amer. Math. Soc. {\bf 43} (1938), 126-166.

\bibitem{Nakai}
  E. Nakai, \textit{ Hardy--Littlewood maximal operator, singular integral
  operators and Riesz potentials on generalized Morrey spaces}, Math. Nachr.
  {\bf 166} (1994), 95-103.

\bibitem{Nakai1}
  E. Nakai, \textit{ A characterization of pointwise multipliers on the Morrey spaces,}
  Scientiae Mathematicae  {\bf 3} (2000), 445-454.

\bibitem{MuckWheed} B. Muckenhoupt and R. L. Wheeden,
\textit{Weighted norm inequalities for singular and fractional integrals},
Trans. Amer. Math. Soc, 161 (1971), 249-258.

\bibitem{Peetre} J. Peetre,
\textit{ On the theory of $M_{p,\lambda}$}, J. Funct. Anal. {\bf 4} (1969), 71-87.

\bibitem{St}
  Stein, E.M.: Singular integrals and differentiability of functions.
Princeton University Press, Princeton, NJ, 1970.

\bibitem{Stein58} E.M. Stein,
\textit{ On the functions of Littlewood-Paley, Lusin, and Marcinkiewicz},
Trans. Amer. Math. Soc. {\bf 88} (1958), 430-466.

\bibitem{Stein93} E.M. Stein,
\textit{ Harmonic Analysis: Real Variable Methods, Orthogonality and Oscillatory
Integrals}, Princeton Univ. Press, Princeton NJ, 1993.

\bibitem{Torch} A. Torchinsky,
\textit{ Real Variable Methods in Harmonic Analysis},
Pure and Applied Math. 123, Academic Press, New York, 1986.

\bibitem{TorWang}
A. Torchinsky and S. Wang, \textit{ A note on the Marcinkiewicz integral},
Colloq. Math. {\bf 60/61} (1990), 235-243.

\bibitem{Wiener1} N. Wiener,
\textit{ Generalized Harmonic Analysis}, Acta Math., 55 (1930), 117-258.

\bibitem{Wiener2} N. Wiener,
\textit{Tauberian theorems}, Ann. Math., 33 (1932), 1-100.

\end{thebibliography}
\end{document}